
\documentclass[reqno,  12pt]{amsart}
\usepackage{extarrows}
\usepackage{ragged2e}
\usepackage{amssymb}
\usepackage{amsfonts}
\usepackage{amsmath}
\usepackage{mathrsfs}
\usepackage{diagbox}
\usepackage{color,  soul}
\usepackage{bbm}
\usepackage{amsthm, graphicx,  empheq}
\usepackage{framed}
\usepackage[backref,  colorlinks,  linkcolor=red,  anchorcolor=green,  citecolor=blue]{hyperref}

\usepackage{cancel}
\usepackage{ulem}
\usepackage{arydshln}

\usepackage{tikz}
\usetikzlibrary{arrows,backgrounds,snakes,shapes}

\usepackage{booktabs}
\usepackage{makecell}
\usepackage[linesnumbered,ruled]{algorithm2e}

\usepackage{lineno}  

\setlength{\oddsidemargin}{0cm} \setlength{\evensidemargin}{0cm}
\setlength{\topmargin}{-0.5cm} \setlength{\footskip}{1cm}
\setlength{\textheight}{23.5cm} \setlength{\textwidth}{16.5cm}

\newtheorem{lemma}{Lemma}[section]
\newtheorem{theorem}{Theorem}[section]
\newtheorem{definition}{Definition}[section]
\newtheorem{remark}{Remark}[section]
\numberwithin{equation}{section}

\makeatletter
\@namedef{subjclassname@2020}{\textup{2020} Mathematics Subject Classification}
\makeatother

\usetikzlibrary{calc,trees,positioning,arrows,chains,shapes.geometric,%
    decorations.pathreplacing,decorations.pathmorphing,shapes,%
    matrix,shapes.symbols}
\tikzset{
>=stealth',
punktchain/.style={
	rectangle, 
	rounded corners, 
	draw=black, very thick,
	text width = 8em, 
	minimum height=2em, 
	text centered
                        },
yuan/.style={
	ellipse, 
	rounded corners, 
	draw=black, very thick,
	text width = 8em, 
	minimum height=2em, 
	text centered
                        },
line/.style={draw, thick, <-},
element/.style={
	tape,
	top color=white,
	bottom color=blue!50!black!60!,
	minimum width=8em,
	draw=blue!40!black!90, very thick,
	text width=10em, 
	minimum height=3.5em, 
	text centered, 
	on chain},
every join/.style={->, thick,shorten >=1pt},
decoration={brace},
tuborg/.style={decorate},
tubnode/.style={midway, right=2pt},
punkt/.style={
	rectangle, 
	rounded corners, 
	draw=black, very thick,
	text width=12em, 
	minimum height=3em, 
	text centered},
}

\begin{document}


\title[Integral Invariant-type Weak Solution]
{Well-posedness and Regularity of the Integral Invariant Model from Linear Scalar Transport Equation}
\footnote{This is a revised version of the manuscript originally submitted to \textit{Journal of Mathematical Analysis and Applications} on Jan 14, 2026. 
Following peer review, the paper has been modified and is currently under re-review. 
The previous preprint version (arXiv: 2503.07028) has been updated to reflect these changes. }
\author{Zhengrong Xie}

\address[Z. Xie]{School of Mathematical Sciences,  
East China Normal University, Shanghai 200241, China}\email{\tt xzr\_nature@163.com;\ 52265500018@stu.ecnu.edu.cn}

\keywords{Integral Invariant Model; Weak Solution; Well-posedness; $L^p$-regularity; Riesz Representation Theorem; Bochner Space.} 

\subjclass[2020]{35A01,   35A02,    35D30,   35L02,   46E35,   65N30}

\date{\today}

\begin{abstract}
  An integral invariant model derived from the coupling of the transport equation and its adjoint equation is investigated. 
  Despite extensive research on the numerical implementation of this model, 
  no studies have yet explored the well-posedness and regularity of the model itself. 
  To address this gap, firstly, a comprehensive mathematical definition is formulated as a Cauchy initial value problem for the integral invariant model. 
  This formulation preserves essential background information derived from relevant numerical algorithms. 
  In the above definition, we directly evolve the time-dependent test function $\psi(\mathbf{x},t)$ through explicit construction rather than solving the adjoint equation, 
  which enables reducing the required regularity of the test function $\Psi(\mathbf{x})$ from $C^1(\Omega)$ to $L^2(\Omega)$, contributing to stability proof.  
  The challenge arising from the mismatch of integration domains on both sides of the model's equivalent form is overcome through the compact support property of test functions. 
  For any arbitrary time instant $t^{*}\in[0,T]$, an abstract function $\mathcal{U}(\lambda)$ taking values in the Banach space $L^2(\mathbb{R}^d)$ is initially constructed on the entire space $\mathbb{R}^d$ via the Riesz representation theorem. 
  Subsequently, this function is properly restricted to the time-dependent bounded domain $\widetilde{\Omega}(t)$ through multiplication by the characteristic function.
  The existence of this model's solution in $L^{1}([0,T],L^2(\widetilde{\Omega}(t)))$ is then rigorously established. 
  Furthermore, by judiciously selecting test functions $\Psi$, the stability of the integral invariant model is proved, from which the uniqueness naturally follows. 
  Finally, when the initial value \(\widetilde{U}_0 \in L^{2}(\widetilde{\Omega}(0))\),  
  the temporal integrability of the model over $[0,T]$ can be enhanced to \(L^{\infty}([0,T],L^2(\widetilde{\Omega}(t)))\). 
\end{abstract}

\allowbreak
\allowdisplaybreaks

\maketitle

\tableofcontents 

\section{Introduction}\label{sec-Introduction}
Consider the integral invariant formulation
\begin{equation}\label{eq-IntegralInvariantFormulation}
  \frac{\mathrm{d}}{\mathrm{d}t} \int_{\widetilde{\Omega}(t)} U \psi \, d\mathbf{x} = 0,
\end{equation}
which is derived from the linear scalar transport equation
\begin{equation}\label{eq-scalar-linear-transport}
  U_t + \nabla \cdot (\mathbf{A} U) = 0 \quad\text{in} \quad \Omega\times[0,T] 
\end{equation}
and its adjoint equation(dual equation) \cite{ref-HEC93, ref-CRH90, ref-RC02}
\begin{equation}\label{eq-adjoint}
  \psi_t + \mathbf{A} \cdot \nabla \psi = 0 \quad\text{in} \quad \Omega\times[0,T] 
\end{equation}
via the Leibniz rule (differentiation formula for integrals with variable limits, see Appendix \ref{appendix-Differentiation of a Definite Integral with Variable Limits}) 
or the Reynolds transport theorem in fluid mechanics \cite{ref-GNQ14}. 
Here, $U$ represents a physical quantity such as mass, energy, or particle number density, 
while $\psi$ denotes a time-dependent test function. This formulation differs fundamentally from conventional finite element or discontinuous Galerkin frameworks where test functions are typically time-independent. 
\( T \) is a positive constant, and \( \Omega \subset \mathbb{R}^d \) is a bounded domain. 
The vector function \( \mathbf{A}(\mathbf{x},t) = (a_1(\mathbf{x},t), a_2(\mathbf{x},t), \ldots, a_d(\mathbf{x},t))^{\text{T}} \) is assumed to be continuous with respect to both \( \mathbf{x} \) and \( t \), and its first-order partial derivatives are also continuous.  
The characteristic equation associated with Equation \eqref{eq-adjoint} is given by 
\begin{align}\label{eq-characteristic}
\frac{\mathrm{d}\mathbf{x}}{\mathrm{d}t} = \mathbf{A}(\mathbf{x},t),
\end{align}
and \(\widetilde{\Omega}(t)\) represents the characteristic space-time region emanating from the characteristic lines, 
as illustrated in Figure \ref{Fig.CharacteristicDynamicDomain}.
\begin{figure}[htbp]
  \begin{center}
    \begin{minipage}{0.49\linewidth}
      \centerline{\includegraphics[width=1\linewidth]{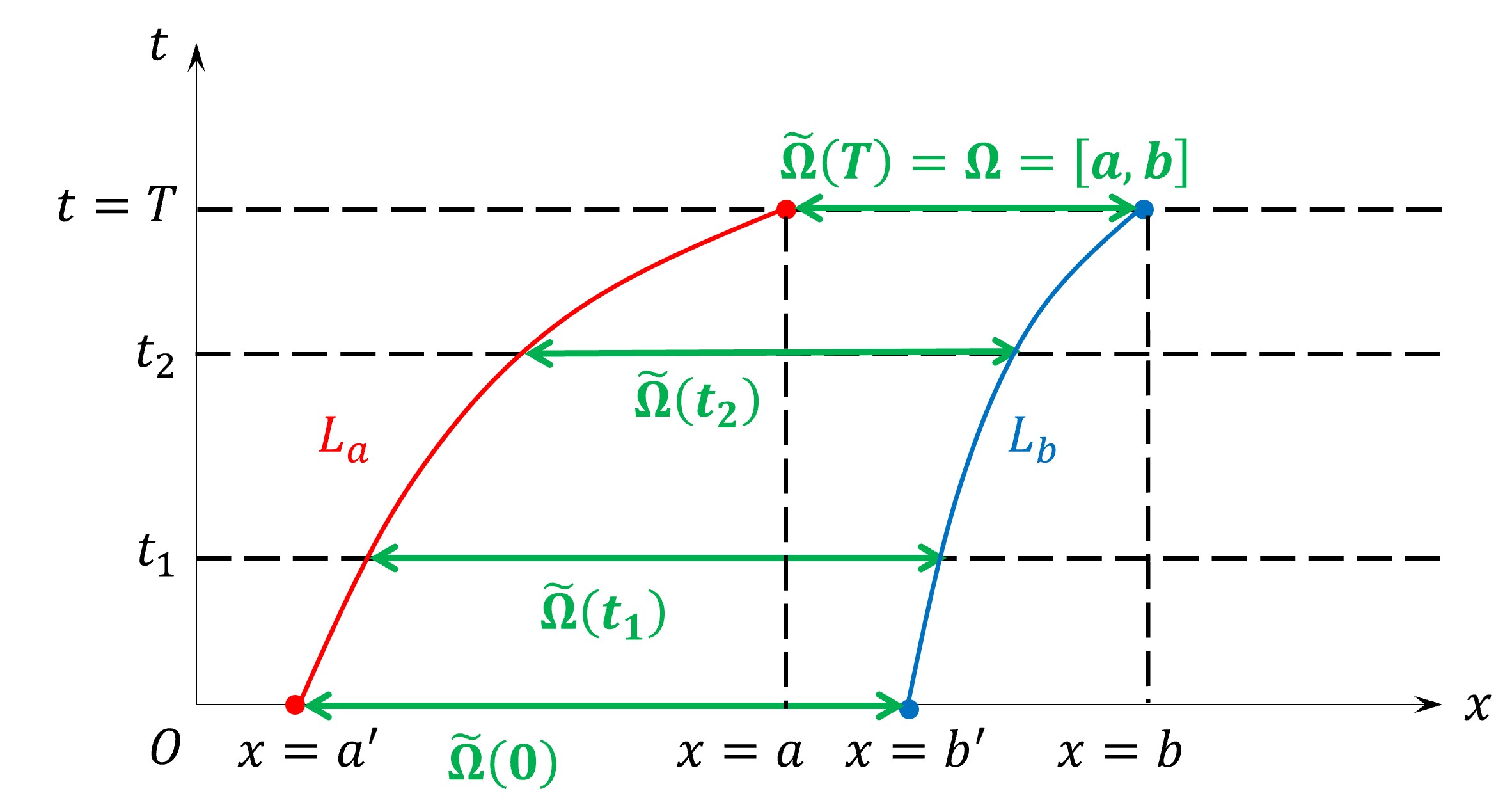}}
    \end{minipage}
    \hfill
    \begin{minipage}{0.49\linewidth}
      \centerline{\includegraphics[width=1\linewidth]{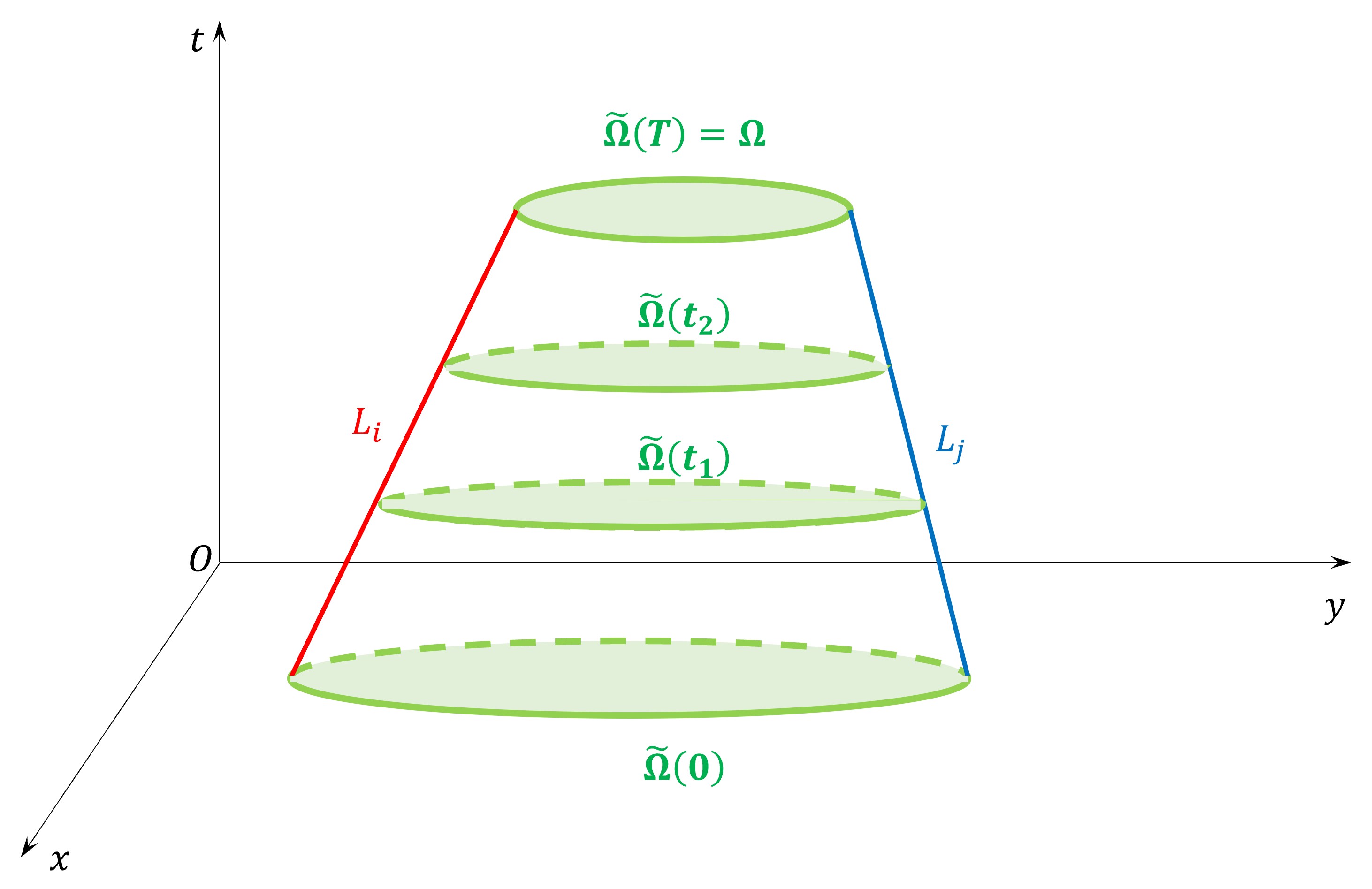}}
    \end{minipage}
    \vfill
    \begin{minipage}{0.49\linewidth}
      \centerline{\tiny{(a)\ $\Omega\subset\mathbb{R}$}}
    \end{minipage}
    \hfill
    \begin{minipage}{0.49\linewidth}
      \centerline{\tiny{(b)\  $\Omega\subset\mathbb{R}^2$}}
    \end{minipage}
  \end{center}
\caption{\tiny{Characteristic\ Dynamic\ Domain. }}
\label{Fig.CharacteristicDynamicDomain}
\end{figure}
\par
An essential condition for the validity of the integral invariant formulation \eqref{eq-IntegralInvariantFormulation} is evidently the non-intersection of characteristic lines. 
Therefore, if we denote the time at which discontinuities appear in the adjoint equation \eqref{eq-adjoint} 
(i.e., the intersection of characteristic lines) as \(T^{*}\), we require
\begin{align}
T < T^{*}\ (T^{*} \leq \infty),
\end{align}
where \(T^{*} = \infty\) implies that the characteristic lines never intersect, and the entire flow field remains smooth without discontinuities, e.g., incompressible flow with $\nabla\cdot\mathbf{A}=0$. 
In this case, no upper limit is imposed on \(T\).
\par
The linear scalar transport equation\eqref{eq-scalar-linear-transport} has significant applications across multiple disciplines including microscale drug delivery, plasma confinement studies, atmospheric pollutant dispersion, climate system modeling, and sediment transport dynamics. 
Meeting critical engineering demands for \textbf{long-time simulations}, \textbf{large-step computations}, and \textbf{unconditionally stable schemes} \textbf{is achieved by discretizing the integral invariant formulation} \eqref{eq-IntegralInvariantFormulation}, 
as demonstrated by the development of the \textbf{semi-Lagrangian discontinuous Galerkin (SLDG)} method\cite{ref-CGQ17,ref-CBQ20,ref-CGQ19}. 
{\color{black}{Although numerous numerical studies have implemented this model, the \textbf{fundamental mathematical analysis} of the integral invariant formulation \eqref{eq-IntegralInvariantFormulation}--particularly its well-posedness (existence, uniqueness, continuous dependence) and spatial-temporal regularity--remains an open problem. 
{\color{red}{(Rodolfo Bermejo's work on the Lagrange-Galerkin method targets real-world, complex fluid-dynamics problems represented by the Euler or Navier-Stokes equations \cite{ref-CCB21}; it does not directly address the model linear scalar transport equation or the foundational PDE weak-solution theory that we consider here. Moreover, our research setting is not fully Lagrangian but rather emphasizes a semi-Lagrangian framework.)}}
From a finite element perspective, this model\eqref{eq-IntegralInvariantFormulation} can be interpreted as a \textbf{non-conventional weak formulation} of the transport equation\eqref{eq-scalar-linear-transport} within a moving domain and time-dependent test functions. 
Just as the Lax-Milgram theorem underpins finite element methods by establishing well-posedness for Galerkin variational problems of elliptic PDEs, 
proving analogous results for this integral invariant framework would provide rigorous mathematical justification for numerical algorithms relying on its discretization, such as the aforementioned SLDG method. 
Moreover, the model occupies a unique position in PDE theory: it is neither a classical elliptic/parabolic/hyperbolic equation nor an integro-differential equation (e.g., Kirchhoff type\cite{ref-Kirchhoff83}). Investigating this structure could expand the horizons of modern PDE analysis by introducing a new category of \textbf{evolution equations defined through integral invariants}, potentially inspiring novel analytical techniques for transport-dominated systems. }}
\par
A rigorous analysis of well-posedness and regularity fundamentally depends on first establishing a \textbf{comprehensive mathematical formulation} for the integral invariant model. 
Beyond the principal equation \eqref{eq-IntegralInvariantFormulation}, this entails specifying initial conditions at a designated time slice, potential boundary conditions, and prescribing an appropriate solution space.  
Critical to this formulation, as developed in Subsection \ref{Sec-def-WeakSol}, is the avoidance of potential ill-posedness inherent in the adjoint equation's Cauchy final-value problem. 
This challenge involves developing quantitative descriptions for both the time-evolution of the domain $\widetilde{\Omega}(t)$ and the propagation dynamics of test functions $\psi(\mathbf{x},t)$. 
\par
The main work of this paper is as follows: Firstly,  after establishing the comprehensive definition of the integral invariant model, we utilize the compact support property of
test functions to overcome the difficulty caused by the time-varying integration domain and then employ the Riesz representation theorem and 
the theory of abstract functions valued in Banach spaces to prove the existence in $L^{1}([0,T],L^2(\widetilde{\Omega}(t)))$.  
Secondly, by skillfully selecting the test function \(\Psi\), we demonstrate the continuous dependence on initial data(stability) for solutions of the integral invariant model, 
which naturally leads to its uniqueness. 
Following the well-posedness analysis of the integral invariant model, we further investigate its regularity.  
By repeatedly utilizing an intermediate result from the existence proof——$\|U(\mathbf{x},t^*)\|^q_{L^2(\widetilde{\Omega}(t^*))} \lesssim \|\widetilde{U}_0\|^q_{L^2(\widetilde{\Omega}(0))},~2 \leq q < \infty$, 
we can conclude that when the initial value $\widetilde{U}_0(\mathbf{x})\in L^2(\widetilde{\Omega}(0))$, the integral invariant model can indeed achieve higher integrability in time $[0,T]$, specifically
\( U(\mathbf{x},t) \in L^q([0,T],L^2(\widetilde{\Omega}(t))) \) for \( 2 \leq q < \infty \).   
Finally, with the aid of the established Lemma \ref{lemma-Lp-Linfty} from previous work\cite{ref-CHenSX18-lemma}, we can further obtain:
\( U(\mathbf{x},t) \in L^{\infty}([0,T],L^2(\widetilde{\Omega}(t))) \) with
\( \|U(\mathbf{x},t)\|_{L^{\infty}([0,T],L^2(\widetilde{\Omega}(t)))}=\lim_{q \to \infty}\|U(\mathbf{x},t)\|_{L^{q}([0,T],L^2(\widetilde{\Omega}(t)))} \). 
\par 
The paper is organized as follows: Section \ref{sec-HYP-Preliminaries-Notations} introduces some preliminaries and the notations defined in this paper, which will be employed in the subsequent definitions and proofs. 
Section \ref{Sec-def-WeakSol} constructs comprehensive mathematical formulation for the integral invariant model and Section \ref{sec-WeakSol-Existence-Uniqueness-Stability} successively proves its existence, stability, and uniqueness. 
The regularity enhancement of this model is addressed in Section \ref{sec-WeakSol-regularity-t-Linfty}.  
All lemmas required for the proofs of the theorems in Section \ref{sec-WeakSol-Existence-Uniqueness-Stability} and Section \ref{sec-WeakSol-regularity-t-Linfty} are listed in Section \ref{sec-lemmas}. 
Finally, Section \ref{sec-Conclusion} summarizes the paper.

\section{Basic Assumptions, Preliminaries and Notations}\label{sec-HYP-Preliminaries-Notations}
First, assume that the velocity field $\mathbf{A}(\mathbf{x},t)$ in the linear scalar transport equation\eqref{eq-scalar-linear-transport} satisfies a uniformly bounded gradient condition, i.e., 
{\color{red}{there exists a constant $L_A > 0$ such that }}
\begin{align}\label{eq-gardA-UniformBoundedness}
  \sup_{\mathbf{x} \in \mathbb{R}^d, t \in [0, T]} \left\| \nabla_{\mathbf{x}} \mathbf{A}(\mathbf{x}, t) \right\|_{\mathrm{op}} \leq L_A,  
\end{align}
where $\nabla_{\mathbf{x}} \mathbf{A}(\mathbf{x}, t)$ is the Jacobian matrix of $\mathbf{A}$ with respect to the spatial variables $\mathbf{x}$.
$\|\cdot\|_{\mathrm{op}}$ denotes the operator norm of a matrix, defined as:
$$
\left\| \nabla_{\mathbf{x}} \mathbf{A} \right\|_{\mathrm{op}} = \sup_{\|\mathbf{v}\|=1} \left\| \left( \nabla_{\mathbf{x}} \mathbf{A} \right) \mathbf{v} \right\|. 
$$
This norm is equivalent to the largest singular value (spectral norm) of the Jacobian matrix. 
It is important to note that the \textbf{uniformly bounded gradient} condition implies that $\mathbf{A}$ satisfies the \textbf{linear growth condition}. 
Specifically, there exist continuous functions $\alpha(t)$ and $\beta(t)$ such that:
\begin{align}\label{eq-A-linearGrowth}
  \|\mathbf{A}(\mathbf{x},t)\| \leq \alpha(t)\cdot\|\mathbf{x}\| + \beta(t), \quad \forall (\mathbf{x},t)\in\mathbb{R}^d\times[0,T]. 
\end{align}
Furthermore, if the gradient of $\mathbf{A}$ is uniformly bounded, then \textbf{the divergence of $\mathbf{A}$ is necessarily uniformly bounded,} i.e., there exists $M_A>0$ such that 
\begin{align}\label{eq-div-UniformBoundedness}
   \left\| \nabla_{\mathbf{x}} \cdot \mathbf{A}(\mathbf{x}, t) \right\|_{L^{\infty}(\mathbb{R}^d)} \leq M_A, ~\forall t \in [0, T].   
\end{align}
Here, a feasible choice for the upper bound \(M_A\) is \(M_A = d L_A\). (Inspired by the relationship between the trace of a matrix and its row norm and operator norm) 
\par
{\color{red}{
Appendix \ref{appendix-A-HYP-Examples} presents standard SLDG test cases, which serve to validate the aforementioned underlying assumptions for the velocity field $\mathbf{A}(\mathbf{x},t)$. 
}}
\par
{\color{red}{As noted in Section \ref{sec-Introduction}, $\mathbf{A}(\mathbf{x},t)$ has \textbf{continuous first-order partial derivatives}, ensuring that both the Cauchy initial value problem and the terminal value problem for $\frac{\mathrm{d}\mathbf{s}}{\mathrm{d}\tau}=\mathbf{A}(\mathbf{s},\tau)$ are \textbf{locally well-posed} (on the intervals $[0,\delta]$ and $[T-\delta,T]$, respectively, with $\delta$ sufficiently small).  
Furthermore, when combined with the \textbf{linear growth constraint}~\eqref{eq-A-linearGrowth} resulting from the \textbf{uniformly bounded gradient condition}~\eqref{eq-gardA-UniformBoundedness}, the solutions to the Cauchy initial/terminal value problems for $\frac{\mathrm{d}\mathbf{s}}{\mathrm{d}\tau}=\mathbf{A}(\mathbf{s},\tau)$ can be extended to the entire interval $[0,T]$, i.e., they become \textbf{globally well-posed}. }} 
Therefore, for any fixed time $t \in [0,T]$ we can define the flow mapping $\operatorname{D}^{\mathbf{A}}_{T \to t}$: 
\begin{equation}\label{eq-FlowMap-tildeOmega}
  \begin{aligned}
    \operatorname{D}^{\mathbf{A}}_{T \to t}:~\Omega &\rightarrow \widetilde{\Omega}(t) \\
    \mathbf{x}_T &\mapsto \mathbf{x}_t=\mathbf{x}_T-\int_{t}^{T}\mathbf{A}(\mathbf{s}(\tau),\tau)\mathrm{d}\tau, 
  \end{aligned}
\end{equation}
where $\Omega$ is a bounded open domain in $\mathbb{R}^d$, and $\mathbf{s}(\tau)$ satisfies $\frac{\mathrm{d}\mathbf{s}}{\mathrm{d}\tau}=\mathbf{A}(\mathbf{s},\tau)$ with $\mathbf{s}(T)=\mathbf{x}_T$. 
{\color{black}{
Given that $\mathbf{A}$ is \textbf{continuously differentiable} ($\mathbf{A} \in C^1$) and \textbf{its gradient is uniformly bounded}, 
it further follows that \textbf{the flow map \(\operatorname{D}^{\mathbf{A}}_{T \to t}\) is invertible }}} and its inverse operator is given by 
\begin{equation}
  \begin{aligned}
    \operatorname{D}^{\mathbf{A}}_{t \to T}:~\widetilde{\Omega}(t) &\rightarrow \Omega \\
    \mathbf{x}_t &\mapsto \mathbf{x}_T=\mathbf{x}_t+\int_{t}^{T}\mathbf{A}(\mathbf{s}(\tau),\tau)\mathrm{d}\tau,   
  \end{aligned}
\end{equation}
where $\mathbf{s}(\tau)$ satisfies $\frac{\mathrm{d}\mathbf{s}}{\mathrm{d}\tau}=\mathbf{A}(\mathbf{s},\tau)$ with $\mathbf{s}(t)=\mathbf{x}_t$. 
Therefore, under the action of the reversible mapping $\operatorname{D}^{\mathbf{A}}_{T \leftrightarrows t}(\cdot)$, the regions $\Omega$ and $\widetilde{\Omega}(t)$ are \textbf{diffeomorphic} ($\Omega \cong \widetilde{\Omega}(t)$). 
By the composition property of the flow mapping $\operatorname{D}^{\mathbf{A}}_{T \to t}(\cdot)$ and the transitivity of diffeomorphisms, it follows that for any $t^{\star},t^{\dagger}\in[0,T]$:
\begin{subequations}
  \begin{align}
    &\widetilde{\Omega}(t^{\star}) = \operatorname{D}^{\mathbf{A}}_{t^{\dagger} \to t^{\star}}\left(\widetilde{\Omega}(t^{\dagger})\right), \label{eq-Omega-star-dagger} \\
    &\widetilde{\Omega}(t^{\star}) \cong \widetilde{\Omega}(t^{\dagger}). 
  \end{align}  
\end{subequations}
\par \noindent
Additionally, let
\begin{align}\label{eq-widetildeQT}
  \widetilde{Q}_T &:= \left\{\widetilde{\Omega}(t):~t \in [0, T]\right\}. 
\end{align}
\par
{\color{red}{The symbols $C^1$, and $L^P$ are retained to represent their conventional mathematical meanings. 
In this paper, we denote by $H$ the Hilbert space composed of $L^2$-functions and equipped with the $L^2$-inner product. }}
Furthermore, $H_0$ denotes the Hilbert space under the \(L^2\)-inner product with compact support on a bounded domain \(\Omega\), i.e.,  
\begin{align*}
  H_0({\color{black}{\Omega}}) &:= \left\{ f \in L^2({\color{black}{\mathbb{R}^d}}) : \operatorname{supp}\{f\} \subset\subset {\color{black}{\Omega}} \right\}, \\
  \langle f, g \rangle &:= \int_{{\color{black}{\mathbb{R}^d}}} fg \, \mathrm{d}x = \int_{{\color{black}{\Omega}}} fg \, \mathrm{d}x, \quad f, g \in H_0({\color{black}{\Omega}}). 
\end{align*}
\begin{remark}[\(H_0(\Omega) \subset H(\mathbb{R}^d)\)]
  Let \(H({\color{black}{\mathbb{R}^d}})\) denote the Hilbert space formed by the \(L^2\)-space defined on the entire domain \({\color{black}{\mathbb{R}^d}}\) equipped with the \(L^2\)-inner product. 
  Let \(\mathcal{D}(\Omega)\) denote the set of all functions with compact support in \(\Omega\). Then,
  \(
  H_0({\color{black}{\Omega}}) = H({\color{black}{\mathbb{R}^d}}) \cap \mathcal{D}(\Omega).
  \)
\end{remark}
Define the following \textbf{function evolution operator} (an explicit time-dependent transformation):
\begin{align}\label{eq-psi-evolution-operator}
  \operatorname{\Phi}^{\mathbf{A}}_{T \to t}[f_T](\mathbf{x}_t):=f_T(\operatorname{D}^{\mathbf{A}}_{t \to T}(\mathbf{x}_t))=f_T(\mathbf{x}_t+\int_{t}^{T}\mathbf{A}(\mathbf{s}(\tau),\tau)\mathrm{~d}\tau), ~\mathbf{x}_t\in\widetilde{\Omega}(t),~t\in[0,T],~f_T\in L^2(\Omega),   
\end{align}
where $\mathbf{s}(\tau)$ satisfies $\frac{\mathrm{d}\mathbf{s}}{\mathrm{d}\tau}=\mathbf{A}(\mathbf{s},\tau)$ with $\mathbf{s}(t)=\mathbf{x}_t$. 
{\color{black}{It is straightforward to verify the composition property of this operator: for any $t^{\star},t^{\dagger}\in[0,T]$ it holds that }}
\begin{align}\label{eq-evolution-operator-composition-property}
  {\color{black}{\operatorname{\Phi}^{\mathbf{A}}_{T \to t^{\star}}[f_T](\mathbf{x})=\operatorname{\Phi}^{\mathbf{A}}_{t^{\dagger} \to t^{\star}} \circ \operatorname{\Phi}^{\mathbf{A}}_{T \to t^{\dagger}}[f_T](\mathbf{x}), ~\mathbf{x}\in\widetilde{\Omega}(t^{\star}). }}
\end{align}
Since $\operatorname{D}^{\mathbf{A}}_{T \to t}(\cdot)$ is invertible, 
the operator $\operatorname{\Phi}^{\mathbf{A}}_{T \to t}[\cdot]$ is then \textbf{invertible}. Denoting $f_t = \operatorname{\Phi}^{\mathbf{A}}_{T \to t}[f_T](\mathbf{x}_t)$, we have:
\begin{align}
  f_T = \operatorname{\Phi}^{\mathbf{A}}_{t \to T}[f_t](\mathbf{x}_T) = f_t\left(\mathbf{x}_T - \int_{t}^{T} \mathbf{A}(\mathbf{s}(\tau), \tau)  \mathrm{d}\tau\right), ~\mathbf{x}_T \in \Omega, 
\end{align}
where $\mathbf{s}(\tau)$ satisfies $\frac{\mathrm{d}\mathbf{s}}{\mathrm{d}\tau} = \mathbf{A}(\mathbf{s}, \tau)$ with $\mathbf{s}(T) = \mathbf{x}_T$. 
{\color{red}{
\begin{remark}[Deductive foundation: choose between ``uniformly bounded gradient'' or restricted time domain]
  In fact, the uniformly bounded gradient, $\|\nabla_{\mathbf{x}}\mathbf{A}(\mathbf{x},t)\|_{\mathrm{op}} \leq L_A$, guarantees that the characteristic curves of the adjoint equation \eqref{eq-adjoint} never intersect, i.e., $T^{*} = \infty$. 
  Consequently, $T$ is unrestricted from above, allowing us to discuss the long-time behavior. 
  Naturally, if we remove the restriction of ``uniformly bounded gradient'' and consider the possibility of intersecting characteristics ($T^* < \infty$), we would then have to state that the discussion is valid only within the time interval $[0, T]$ before the first intersection of any two characteristic curves ($T < T^*$). 
  Meanwhile, the ``linear growth condition'' must be retained. Even so, the Cauchy initial-value problem and the Cauchy final-value problem for $\frac{\mathrm{d}\mathbf{s}}{\mathrm{d}\tau} = {\mathbf{A}(\mathbf{s},\tau)}$ remain well-posed globally on $[0, T]$. 
  Hence, both $\operatorname{D}^{\mathbf{A}}_{T \to t}(\cdot)$ and $\Phi^{\mathbf{A}}_{T \to t}[\cdot]$ are still invertible. 
  However, a drawback of this approach is that $T$ might be very small, meaning our conclusion holds only for a very short time interval (\textbf{even though the solution exists globally on $[0, T]$, $T$ itself could be very small}). 
  In summary, we adopt the ``uniform boundedness of the gradient'' as a fundamental assumption for the subsequent discussion. 
\end{remark}
}}
Finally, we need to introduce \textbf{abstract functions taking values in $L^p$-spaces}, defined as follows:
\begin{definition}[Abstract function spaces of $L^{q}(1 \leq q < \infty)$ in time and $L^p(1 \leq p \leq \infty)$ in space]\label{def-Lq-t-Lp-x-FunSpace}
\begin{align*}
  L^{q}([0,T],L^p(\Omega)):=\left\{v(\cdot\ ,t)\in L^p(\Omega),\ \forall t\in[0,T]:\ \|v(\cdot\ ,t)\|_{L^p(\Omega)}\in L^{q}([0,T])\right\}. 
\end{align*}
That is, for each fixed $t \in[0, T]$, $v(\cdot, t) \in L^p(\Omega)$ is required,
and the mapping $t \mapsto\|v(\cdot~, t)\|_{L^p(\Omega)}$ is bounded in the sense of $L^q$-norm on $[0, T]$. 
The norm is defined as 
\begin{align}\label{eq-norm-Lqt-Lpx}
  \|v\|_{L^{q}([0,T],L^p(\Omega))}:=\left(\int_{0}^{T}\|v(\cdot\ ,t)\|^q_{L^p(\Omega)}\mathrm{~d}t\right)^{1/q}.
\end{align} 
\end{definition}
\par\noindent
In this paper, with respect to the spatial variable $\mathbf{x}$ on $\Omega$ under the $L^2$-inner product, the space $L^{q}([0,T],L^2(\Omega))$ can be written as $L^{q}([0,T],H(\Omega))$. 
\begin{remark}[The case $q=\infty$]
  The space $L^{\infty}([0,T],L^p(\Omega))$ is not explicitly defined through the formula \eqref{eq-norm-Lqt-Lpx}, but rather constructed via a limiting process ($q\to\infty$), 
  as detailed in Lemma \ref{lemma-Lp-Linfty}. 
\end{remark}
\par \noindent
{\color{red}{
The abstract function spaces defined in Definition \ref{def-Lq-t-Lp-x-FunSpace} are commonly referred to as \textbf{Bochner spaces}, 
sometimes denoted concisely by $L^q_t L^p_x ([0,T]\times\Omega)$. For further details on Bochner spaces, we refer the reader to \cite{ref-RRS16-3D-NS}. 
}}

\section{Comprehensive Mathematical Definition of Integral Invariant Model}\label{Sec-def-WeakSol}
\begin{definition}[Comprehensive Mathematical Formulation of Integral Invariant Model]\label{def-WeakSolution}
Let $T$ be a positive constant. 
The bounded open domain $\Omega$ is mapped via the flow mapping $\operatorname{D}^{\mathbf{A}}_{T \to t}$
from the final time $T$ to another time $t\in[0,T]$, resulting in a time-varying bounded domain $\widetilde{\Omega}(t)$, abbreviated as $\widetilde{\Omega}^{t}$. 
{\color{red}{Let $\widetilde{Q}_T := \left\{\widetilde{\Omega}(t):\ t\in[0,T]\right\}$. }}
The function $U(\mathbf{x},t)\in L^{1}([0,T],H(\widetilde{\Omega}(t)))$ is called the solution of the integral invariant model,   
if {\color{black}{for any \(\Psi(\mathbf{x}) \in H_{0}(\Omega)\) }} 
the following Cauchy initial value problem is satisfied:
\begin{subequations}\label{eq-ComprehensiveIntegralInvariantModel}
  \begin{align}
    &\frac{\mathrm{d}}{\mathrm{d}t}\int_{\widetilde{\Omega}(t)}U(\mathbf{x},t)\psi(\mathbf{x},t)\mathrm{~d}\mathbf{x} = 0, \label{eq-Def-IIM}\\ 
    &\psi(\mathbf{x},t)=\operatorname{\Phi}^{\mathbf{A}}_{T \to t}[\Psi](\mathbf{x}), \label{eq-psi-Psi} \\
    &\widetilde{\Omega}(t)=\operatorname{D}^{\mathbf{A}}_{T \to t}(\Omega), \label{eq-Def-tildeOmega} \\ 
    &{\color{black}{\mathrm{I.C.}\quad U(\mathbf{x},0)=\widetilde{U}_{0}(\mathbf{x}),\ \forall \mathbf{x}\in\widetilde{\Omega}(0). }}  \label{eq-Def-U-IC}
  \end{align} 
\end{subequations}
where $L^{1}([0,T],H(\widetilde{\Omega}(t))):=\left\{U(\cdot\ ,t)\in L^2(\widetilde{\Omega}(t)),\ \forall t\in[0,T]:\ \|U(\cdot\ ,t)\|_{L^2(\widetilde{\Omega}(t))}\in L^{1}([0,T])\right\}$. 
Here, {\color{black}{$\widetilde{U}_0\in L^2(\widetilde{\Omega}(0))$, and $\widetilde{U}_0$ is extended to the entire space $\mathbb{R}^d$ by zero extension. }}  
{\color{black}{Additionally, $U(\mathbf{x},t)$ is alse called the integral invariant-type weak solution of the linear scalar transport equation\eqref{eq-scalar-linear-transport}. }} 
\end{definition}
The system of equations \eqref{eq-ComprehensiveIntegralInvariantModel} involves three unknowns—$U$, $\psi$, and $\widetilde{\Omega}$—governed by three control equations: Eqs.\eqref{eq-Def-IIM}, \eqref{eq-psi-Psi}, and \eqref{eq-Def-tildeOmega}   
with three solution-determining conditions—$\psi(\mathbf{x},T)=\Psi(\mathbf{x})$, $\widetilde{\Omega}(T)=\Omega$ and $U(\mathbf{x},0)=\widetilde{U}_{0}(\mathbf{x})~\text{in}~\widetilde{\Omega}(0)$. 
\par
\textbf{Note that the definition of the space-time domain $\widetilde{Q}_T$ implies the following property:  
For any point $(\mathbf{x}, t) \in \widetilde{Q}_T$, the trajectory under the flow map $\operatorname{D}^{\mathbf{A}}_{t \to 0}$ 
traced backward in time will remain within $\widetilde{Q}_T$ until reaching the bottom boundary (the bounded open set $\widetilde{\Omega}(0) \subset \mathbb{R}^d$), 
without ever exiting the space-time domain $\widetilde{Q}_T$ itself. }  
This observation motivates our definition of the integral invariant model on $\widetilde{Q}_T$ in the form of a Cauchy initial value problem, 
where the initial condition $\widetilde{U}_0$ is prescribed only on the bottom boundary $\widetilde{\Omega}(0)$. 
We shall prove later that this constitutes a well-posed formulation. 
\begin{remark}
  In Definition \ref{def-WeakSolution}, the prescribed solution space $L^{2}([0,T],H(\widetilde{\Omega}(t)))$ not only satisfies the definition of Bochner space in Def.\ref{def-Lq-t-Lp-x-FunSpace}, but also requires special attention that the integration domain of its norm is time-dependent. Specifically,
  \begin{align*}
    \|U(\cdot~,t)\|_{L^2(\widetilde{\Omega}(t))} &:= \left( \int_{\widetilde{\Omega}(t)} |U(\mathbf{x},t)|^2  \mathrm{~d}\mathbf{x} \right)^{1/2}, ~\forall t \in [0,T],  \\
    \|U(\mathbf{x},t)\|_{L^{2}([0,T],H(\widetilde{\Omega}(t)))} &:= \int_{0}^{T}\|U(\cdot~,t)\|_{L^2(\widetilde{\Omega}(t))}\mathrm{~d}t. 
  \end{align*}
\end{remark}
{\color{red}{
\begin{remark}[The definition of the integral invariant type weak solution \ref{def-WeakSolution} avoids the Cauchy terminal value problem of the adjoint equation \eqref{eq-adjoint}, thereby reducing the regularity requirement of the test function $\Psi$ from $C^1$ to $L^2$]\label{remark-psi-Cauchy}
  We use ``continuously differentiable'' and ``linear growth'' to ensure that $\frac{\mathrm{d}\mathbf{s}}{\mathrm{d}\tau}=\mathbf{A}(\mathbf{s},\tau)$ is invertible. 
  Using the method of characteristics, it follows that the Cauchy initial/terminal value problems for the adjoint equation \eqref{eq-adjoint} are also well-posed. 
  However, solving the adjoint equation via characteristics has a prerequisite: it requires that $\Psi \in C^1(\Omega)$, so that all derivatives in $\frac{\mathrm{d}\psi}{\mathrm{d}t}=\partial_t \psi + \nabla_{\mathbf{x}}\psi\cdot\frac{\mathrm{d}\mathbf{x}}{\mathrm{d}t}$ are well-defined.  
  Nevertheless, we note that in the traditional Galerkin variational forms for elliptic/parabolic/hyperbolic equations, test functions are often set in weaker Sobolev spaces. 
  To make our work more consistent with the common understanding of weak solutions, it is necessary to find a way to select test functions with lower regularity.
  Therefore, in Definition~\ref{def-WeakSolution}, the integral invariant model~\eqref{eq-ComprehensiveIntegralInvariantModel} does not adopt the adjoint equation~\eqref{eq-adjoint} as the governing equation for the time-varying test function $\psi(\mathbf{x},t)$, 
  but instead directly employs the operator $\Phi^{\mathbf{A}}_{T \to t}[\cdot]$ to evolve $\psi$.  
  This explicit construction method characterizes $\psi$ at any arbitrary time, thereby allowing us to lower the regularity requirement for $\Psi$ to $L^2$ based on the needs of subsequent lemmas and theorems.  
  (Indeed, if we set $\Psi \in C^1(\Omega)$, then in the subsequent stability proof of the solution, the solution obtained from the existence proof only possesses $L^2$ regularity with respect to the spatial variable $x$, making it impossible to construct a test function that meets the required specifications. However, when $\Psi \in L^2(\Omega)$, the $L^2$-solution naturally allows for the construction of an $L^2$-test function, see Eq.\eqref{eq-Stability-testFun-Psi} in the following Subsection \ref{SubSec-Stability-Uniqueness}.) 
  This treatment---where ``the test function belongs to the low regularity $L^2$ space''---more naturally aligns with the conventional framework of finite element weak solutions and better reflects the fundamental feature of ``the low regularity of weak solutions''. 
  \par
  Finally, we emphasize that numerical algorithms based on discretizing Equation \eqref{eq-IntegralInvariantFormulation} universally employ a fixed background grid $\Omega$ and predefined test function $\Psi$ at the time layer $t^{n+1}$.  
  These are then numerically inverted to $t^n$ to obtain approximations $\widetilde{\Omega}^{n+1,n} \approx \widetilde{\Omega}(t^n)$ and $\psi^{n+1,n} \approx \psi(\mathbf{x},t^n)$.  
  \textbf{To remain compatible with the existing numerical algorithmic framework, } the definition of the integral invariant model \eqref{eq-ComprehensiveIntegralInvariantModel} naturally adopts a \textbf{time-reversed description} for the evolution of $\widetilde{\Omega}(t)$ and $\psi(\mathbf{x},t)$.  
  This necessitates Cauchy terminal value problems—both for the characteristic equation $\frac{\mathrm{d}\mathbf{s}}{\mathrm{d}\tau} = \mathbf{A}(\mathbf{s},\tau)$ and the adjoint equation $\psi_t + \mathbf{A} \cdot \nabla \psi = 0$.  
  The operator $\operatorname{\Phi}^{\mathbf{A}}_{T \to t}[\cdot]$ circumvents the well-posedness challenges of solving the adjoint equation directly—an astute and pragmatic design choice. 
  \par
  In practice, during numerical computation, they do not employ a PDE solver to directly solve the adjoint equation.  
  Instead, they directly leverage the Formula \eqref{eq-psi-evolution-operator} in this paper to approximate $\psi(\mathbf{x},t^n)$.  
  Therefore, our definition of integral invariant model inherently preserves the background information from discrete algorithms. 
\end{remark}
}}
\par
For arbitrary $t^{\star},t^{\dagger}\in[0,T]$, let
\begin{align*}
  &\widetilde{\Omega}^{\star}:=\widetilde{\Omega}(t^{\star}),\quad \widetilde{\Omega}^{\dagger}:=\widetilde{\Omega}(t^{\dagger}); \\
  &\psi^{\star}(\mathbf{x}):=\psi(\mathbf{x},t^{\star}), \quad \mathbf{x}\in\widetilde{\Omega}^{\star}; \\
  &\psi^{\dagger}(\mathbf{x}):=\psi(\mathbf{x},t^{\dagger}), \quad \mathbf{x}\in\widetilde{\Omega}^{\dagger}.
\end{align*}
According to the composition properties and reversibility of $\operatorname{D}^{\mathbf{A}}_{T \to t}(\cdot)$ and $\operatorname{\Phi}^{\mathbf{A}}_{T \to t}[\cdot]$, the following relation can be readily obtained:
\begin{align}\label{eq-psi-t1-psi-t2}
  \psi^{\star}(\mathbf{x})=\psi^{\dagger}\left(\mathbf{x}+\int_{t^{\star}}^{t^{\dagger}}\mathbf{A}(\mathbf{s}(\tau),\tau)\mathrm{~d}\tau\right),~\forall \mathbf{x}\in\widetilde{\Omega}^{\star}. 
\end{align}
\par
\textbf{Several equivalent forms of Eq.\eqref{eq-Def-IIM} in Definition \ref{def-WeakSolution}  are presented below to facilitate subsequent proofs: }
\begin{subequations}
  \begin{align}
    &\frac{\mathrm{d}}{\mathrm{d}t}\int_{\widetilde{\Omega}(t)}U(\mathbf{x},t)\psi(\mathbf{x},t)\mathrm{~d}\mathbf{x} = 0 \notag \\
    &\Longleftrightarrow
    \int_{\widetilde{\Omega}(t_2)} U(\mathbf{x},t_2)\psi(\mathbf{x},t_2) \mathrm{~d}\mathbf{x} = \int_{\widetilde{\Omega}(t_1)} U(\mathbf{x},t_1)\psi(\mathbf{x},t_1) \mathrm{~d}\mathbf{x},\ \forall t_1,t_2 \in [0,T]; \\
    &\Longleftrightarrow
    \int_{\widetilde{\Omega}(t)} U(\mathbf{x},t)\psi(\mathbf{x},t) \mathrm{~d}\mathbf{x} = \int_{\widetilde{\Omega}(0)} \widetilde{U}_{0}(\mathbf{x})\psi(\mathbf{x},0) \mathrm{~d}\mathbf{x},\ \forall t \in [0,T]. \label{eq-WeakSol-EquivalentForm2}
  \end{align}
\end{subequations}

\section{Some {\color{red}{Useful}} Lemmas}\label{sec-lemmas}
Note that in the following lemmas, we maintain the assumptions that $\mathbf{A}(\mathbf{x},t)$ has \textbf{continuous first-order partial derivatives} and satisfies the \textbf{uniformly bounded gradient condition} \eqref{eq-gardA-UniformBoundedness}. 
Additionally, please pay attention to the \textbf{uniformly bounded divergence} as described in Eq.\eqref{eq-div-UniformBoundedness}: 
\begin{align*}
  |\nabla_{\mathbf{x}} \cdot \mathbf{A}(\mathbf{x}, t)| \leq M_A, ~\forall \mathbf{x} \in \mathbb{R}^d, t \in\left[0, T\right]. 
\end{align*}
\par
\begin{lemma}[Support of the Time-Dependent Test Function \(\psi\)]\label{Lemma-psi-supp}
  Let \(\Omega \subset \mathbb{R}^d\) be a bounded domain, and assume \(\operatorname{supp}\{\Psi(x)\} \subset\subset \Omega\). Let \(\psi(\mathbf{x}, t)\) satisfy
  $$
    \psi(\mathbf{x},t)=\operatorname{\Phi}^{\mathbf{A}}_{T \to t}[\Psi](\mathbf{x}),~\forall t\in[0,T]. 
  $$
  Then, {\color{red}{for fixed $t$,}} it holds that
  \[
  {\color{red}{\operatorname{supp}\{\psi(\cdot, t)\}}} = \overline{\left\{\mathbf{x}: ~\mathbf{x} + \int_{t}^{T} \mathbf{A}(\mathbf{s}(\tau), \tau) \, \mathrm{d}\tau \in \operatorname{supp}\{\Psi\}\right\}}. 
  \]
  Let \(\widetilde{\Omega}^t = \operatorname{D}^{\mathbf{A}}_{T \rightarrow t}(\Omega)\). Then, \({\color{red}{\operatorname{supp}\{\psi(\cdot, t)\}}} \subset\subset \widetilde{\Omega}^t\).
\end{lemma}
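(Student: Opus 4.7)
The plan is to exploit the invertibility and continuity of the flow map $\operatorname{D}^{\mathbf{A}}_{T \to t}(\cdot)$, which was established earlier via the $C^1$ regularity and uniformly bounded gradient of $\mathbf{A}$. First I would unfold the definition of the evolution operator: for any $\mathbf{x} \in \mathbb{R}^d$,
\begin{equation*}
  \psi(\mathbf{x},t) = \Psi\!\left(\mathbf{x} + \int_{t}^{T} \mathbf{A}(\mathbf{s}(\tau),\tau)\,\mathrm{d}\tau\right) = \Psi\!\left(\operatorname{D}^{\mathbf{A}}_{t \to T}(\mathbf{x})\right),
\end{equation*}
so $\psi(\cdot,t) \neq 0$ if and only if $\operatorname{D}^{\mathbf{A}}_{t \to T}(\mathbf{x}) \in \{\Psi \neq 0\}$. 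Taking closures and using that $\operatorname{D}^{\mathbf{A}}_{t \to T}$ is a homeomorphism (hence preserves closure and commutes with preimages up to closure) yields
\begin{equation*}
  \operatorname{supp}\{\psi(\cdot,t)\} = \overline{(\operatorname{D}^{\mathbf{A}}_{t \to T})^{-1}(\operatorname{supp}\{\Psi\})} = \operatorname{D}^{\mathbf{A}}_{T \to t}\bigl(\operatorname{supp}\{\Psi\}\bigr),
\end{equation*}
which is exactly the first claim written in the lemma.

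For the compact containment $\operatorname{supp}\{\psi(\cdot,t)\} \subset\subset \widetilde{\Omega}^t$, I would argue as follows. By hypothesis $K := \operatorname{supp}\{\Psi\}$ is a compact subset of $\Omega$. Since $\operatorname{D}^{\mathbf{A}}_{T \to t}$ is a continuous bijection from $\Omega$ onto $\widetilde{\Omega}^t = \operatorname{D}^{\mathbf{A}}_{T \to t}(\Omega)$, the image $\operatorname{D}^{\mathbf{A}}_{T \to t}(K)$ is a compact subset of $\widetilde{\Omega}^t$. Moreover, because $\operatorname{D}^{\mathbf{A}}_{T \to t}$ is a diffeomorphism, it maps the boundary $\partial\Omega$ to $\partial\widetilde{\Omega}^t$ and the interior to the interior; hence the strict inclusion $K \subset\subset \Omega$ is transported to
\begin{equation*}
  \operatorname{D}^{\mathbf{A}}_{T \to t}(K) \subset\subset \widetilde{\Omega}^t.
\end{equation*}
Combining this with the support identity above gives the second assertion.

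The routine but non-trivial point to be careful about is the commutation between closure and the flow map; this is where continuity of both $\operatorname{D}^{\mathbf{A}}_{T \to t}$ and its inverse is essential. The main (mild) obstacle is therefore to verify that the diffeomorphism sends the compact core of the support, and a fortiori its distance to $\partial\Omega$, to a strictly positive distance in $\widetilde{\Omega}^t$; this can be done by a standard compactness argument (the continuous function $\mathbf{x} \mapsto \operatorname{dist}(\operatorname{D}^{\mathbf{A}}_{T \to t}(\mathbf{x}), \partial\widetilde{\Omega}^t)$ attains its positive minimum on $K$), appealing to the uniform Lipschitz estimates on the flow implied by~\eqref{eq-gardA-UniformBoundedness}. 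No further regularity beyond what was already assumed on $\mathbf{A}$ is required.
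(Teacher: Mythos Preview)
Your argument is correct and is precisely the ``straightforward'' verification the paper has in mind: the paper's own proof consists of the single sentence ``The proof is straightforward,'' so your unpacking of the evolution operator, followed by the homeomorphism property of the flow map to commute closures with preimages and transport the compact inclusion $K\subset\subset\Omega$ to $\operatorname{D}^{\mathbf{A}}_{T\to t}(K)\subset\subset\widetilde{\Omega}^t$, is exactly what is being omitted. There is nothing to add.
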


\begin{proof} 
  {\color{red}{The proof is straightforward. }}
\end{proof}

\begin{remark}
  $\Psi$ depends only on $\mathbf{x}$ and is independent of time $t$. 
  By treating $t$ as a fixed time instant, we fix $\psi(\mathbf{x},t)$ to be a function of $\mathbf{x}$ alone, denoted as $\psi(\mathbf{x};t)$. 
  {\color{red}{Consequently, $\operatorname{supp}\{\psi(\mathbf{x};t)\}$ becomes a set, while being defined with respect to \(\mathbf{x}\), is contingent upon the chosen value of the parameter $t$.  }}
\end{remark}  
Lemma \ref{Lemma-psi-supp} establishes that the time-dependent test function $\psi$ maintains \textbf{compact support} throughout its evolution, and the evolution of its support precisely aligns with the movement and deformation of $\widetilde{\Omega}(t)$.
\par
{\color{red}{
\begin{lemma}[$L^P(1 \leq P < \infty)$-Norm Control of Time-Dependent Test Functions $\psi$ on Moving Deforming Domains $\widetilde{\Omega}^t$ or Whole Space $\mathbb{R}^d$ Across Time]\label{Lemma-psi-Omega-Rd-LP}
Let $1 \leq p < \infty$, and let $\Psi \in L^p(\mathcal{G}_T)$, where the domain $\mathcal{G}_T$ is either a bounded domain $\Omega \subset \mathbb{R}^d$ or the entire space $\mathbb{R}^d$. For any $t \in [0, T]$, define the time-evolved domain $\mathcal{G}_t$ and the function $\psi(\cdot, t)$ via the flow mapping and evolution operator respectively:
\[
\mathcal{G}_t = \mathrm{D}_{T \to t}^{\mathbf{A}}(\mathcal{G}_T); \quad \psi(\mathbf{x}, t) = \Phi_{T \to t}^{\mathbf{A}}[\Psi](\mathbf{x}), ~\mathbf{x} \in \mathcal{G}_t.
\]
Then, the following norm equivalence holds uniformly in time:
\begin{align*}
e^{-M_A |T-t|/p} \, \|\Psi\|_{L^p(\mathcal{G}_T)} \leq \|\psi(\cdot, t)\|_{L^p(\mathcal{G}_t)} \leq e^{M_A |T-t|/p} \, \|\Psi\|_{L^p(\mathcal{G}_T)}, \quad \forall t \in [0, T],
\end{align*}
where the constant $M_A$ is the uniform bound on the divergence of $\mathbf{A}$ as given in the assumption \eqref{eq-div-UniformBoundedness}.
\end{lemma}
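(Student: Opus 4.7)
The plan is to reduce everything to a change-of-variables argument on the flow map $\mathrm{D}^{\mathbf{A}}_{T\to t}$, whose Jacobian determinant is then controlled via Liouville's formula and the uniform divergence bound \eqref{eq-div-UniformBoundedness}. Because the argument is local at each fixed time $t$, the same proof will work for both choices $\mathcal{G}_T=\Omega$ and $\mathcal{G}_T=\mathbb{R}^d$, so I would not need to treat the two cases separately.

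First, I would unfold the definition of $\Phi^{\mathbf{A}}_{T\to t}$: since $\psi(\mathbf{x},t)=\Psi(\mathrm{D}^{\mathbf{A}}_{t\to T}(\mathbf{x}))$, I would write
\[
\|\psi(\cdot,t)\|_{L^p(\mathcal{G}_t)}^p \;=\; \int_{\mathcal{G}_t}\bigl|\Psi(\mathrm{D}^{\mathbf{A}}_{t\to T}(\mathbf{x}))\bigr|^p\,\mathrm{d}\mathbf{x},
\]
and then perform the substitution $\mathbf{y}=\mathrm{D}^{\mathbf{A}}_{t\to T}(\mathbf{x})$, equivalently $\mathbf{x}=\mathrm{D}^{\mathbf{A}}_{T\to t}(\mathbf{y})$. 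Since the flow map is a $C^1$-diffeomorphism between $\mathcal{G}_T$ and $\mathcal{G}_t$ (established earlier in Section~\ref{sec-HYP-Preliminaries-Notations}), this change of variables is legitimate and yields
\[
\|\psi(\cdot,t)\|_{L^p(\mathcal{G}_t)}^p \;=\; \int_{\mathcal{G}_T}|\Psi(\mathbf{y})|^p\,\bigl|\det J(\mathbf{y},t)\bigr|\,\mathrm{d}\mathbf{y},
\qquad J(\mathbf{y},t):=\tfrac{\partial}{\partial \mathbf{y}}\mathrm{D}^{\mathbf{A}}_{T\to t}(\mathbf{y}).
\]

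The core step is then to control $|\det J(\mathbf{y},t)|$. Fixing $\mathbf{y}\in\mathcal{G}_T$ and letting $\mathbf{s}(\tau)$ be the characteristic with $\mathbf{s}(T)=\mathbf{y}$, Liouville's formula applied to the linearization of $\dot{\mathbf{s}}=\mathbf{A}(\mathbf{s},\tau)$ gives
\[
\det J(\mathbf{y},t) \;=\; \exp\!\left(-\int_t^T \nabla_{\mathbf{x}}\!\cdot\!\mathbf{A}(\mathbf{s}(\tau),\tau)\,\mathrm{d}\tau\right),
\]
since $\det J(\mathbf{y},T)=1$. Invoking the uniform divergence bound $|\nabla_{\mathbf{x}}\!\cdot\!\mathbf{A}|\leq M_A$ from \eqref{eq-div-UniformBoundedness}, I get the pointwise-in-$\mathbf{y}$ two-sided estimate
\[
e^{-M_A|T-t|} \;\leq\; \bigl|\det J(\mathbf{y},t)\bigr| \;\leq\; e^{M_A|T-t|}.
\]

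Inserting this bound into the previous integral identity produces
\[
e^{-M_A|T-t|}\,\|\Psi\|_{L^p(\mathcal{G}_T)}^p \;\leq\; \|\psi(\cdot,t)\|_{L^p(\mathcal{G}_t)}^p \;\leq\; e^{M_A|T-t|}\,\|\Psi\|_{L^p(\mathcal{G}_T)}^p,
\]
and taking $p$-th roots delivers the claimed equivalence. There is no serious obstacle here; the only point requiring care is the justification of Liouville's formula, which rests on $\mathbf{A}\in C^1$ and the global well-posedness of the characteristic ODE on $[0,T]$ (both already secured in Section~\ref{sec-HYP-Preliminaries-Notations}). Notably, when $\mathcal{G}_T=\mathbb{R}^d$ the diffeomorphism is global, so the change of variables and the divergence bound apply verbatim without any boundary or support considerations.
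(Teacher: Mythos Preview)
Your proposal is correct and follows essentially the same approach as the paper: a change of variables via the flow map $\mathrm{D}^{\mathbf{A}}_{T\to t}$, Liouville's formula for the Jacobian determinant, and the uniform divergence bound \eqref{eq-div-UniformBoundedness} to obtain the two-sided estimate, followed by taking $p$-th roots. The paper's proof is identical in structure and detail.
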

}}


\begin{proof}
  It is clear that 
  \[
    \|\psi(\cdot\ , t)\|^P_{L^P(\mathcal{G}_t)} 
    = \int_{\mathcal{G}_t} \left|\psi(\mathbf{x}_t, t)\right|^P \, \mathrm{d}\mathbf{x}_t
    =\int_{\mathcal{G}_t} \left|\operatorname{\Phi}^{\mathbf{A}}_{T \to t}[\Psi](\mathbf{x}_t)\right|^P \, \mathrm{d}\mathbf{x}_t
    =\int_{\mathcal{G}_t} \left|\Psi(\operatorname{D}^{\mathbf{A}}_{t \to T}(\mathbf{x}_t))\right|^P \, \mathrm{d}\mathbf{x}_t. 
  \]
  Perform the following change of variables:
  {\color{black}{
  \[
    \mathbf{x}_T := \operatorname{D}^{\mathbf{A}}_{t \to T}(\mathbf{x}_t)
  \]
  }}
  with the corresponding transformations:
  {\color{black}{
  \[
  \begin{aligned}
    \mathcal{G}_t \mapsto \mathcal{G}_T.  
  \end{aligned}
  \]
  }}
  Given that the flow map $\operatorname{D}^{\mathbf{A}}_{t \to T}$ is invertible with inverse $\operatorname{D}^{\mathbf{A}}_{T \to t}$, we have:
  \begin{align*}
    \mathbf{x}_t &= \left(\operatorname{D}^{\mathbf{A}}_{t \to T}\right)^{-1}(\mathbf{x}_{T}) = \operatorname{D}^{\mathbf{A}}_{T \to t}(\mathbf{x}_T), \\
    \mathrm{d}\mathbf{x}_t &= \left| \det J_{\operatorname{D}^{\mathbf{A}}_{T \to t}}(\mathbf{x}_T) \right| \mathrm{d}\mathbf{x}_T,
  \end{align*}
  where $J_{\operatorname{D}^{\mathbf{A}}_{T \to t}}(\mathbf{x}_T)$ is the Jacobian matrix of the flow map $\operatorname{D}^{\mathbf{A}}_{T \to t}$. 
  Then 
  \[
    \|\psi(\cdot\ , t)\|^P_{L^P(\mathcal{G}_t)} 
    =\int_{\mathcal{G}_T} \left|\Psi(\mathbf{x}_T)\right|^P \left| \det J_{\operatorname{D}^{\mathbf{A}}_{T \to t}}(\mathbf{x}_T) \right| \mathrm{d}\mathbf{x}_T. 
  \]
  Noting the definition (Eq.\eqref{eq-FlowMap-tildeOmega}) of the reversible flow map \(\operatorname{D}^{\mathbf{A}}_{T \to t}\), an application of \textbf{Liouville's Formula} \cite{ref-A92} yields
  \begin{align*}
    \left|\operatorname{det} J_{\operatorname{D}^{\mathbf{A}}_{T \to t}}(\mathbf{x}_T)\right|=\left|\operatorname{det} J_{\operatorname{D}^{\mathbf{A}}_{t \to T}}(\mathbf{x}_t)\right|^{-1}=\exp \left(\int_{t}^{T} \nabla \cdot \mathbf{A}(\mathbf{s}(\tau), \tau) d \tau\right)^{-1}=\exp \left(-\int_{t}^{T} \nabla \cdot \mathbf{A}(\mathbf{s}(\tau), \tau) d \tau\right).  
  \end{align*}
  Utilizing the \textbf{uniformly bounded divergence condition} (Eq. \eqref{eq-div-UniformBoundedness}), we obtain the following estimate:
  \begin{align*}
    \exp(-M_A|T-t|) \leq \left|\det J_{\operatorname{D}^{\mathbf{A}}_{T \to t}}(\mathbf{x}_T)\right| \leq \exp(M_A|T-t|). 
  \end{align*}
  Consequently,
  \[
    e^{-M_A|T-t|} \int_{\mathcal{G}_T} \left|\Psi(\mathbf{x}_T)\right|^P \mathrm{d}\mathbf{x}_T \leq \int_{\mathcal{G}_T} \left|\Psi(\mathbf{x}_T)\right|^P \left| \det J_{\operatorname{D}^{\mathbf{A}}_{T \to t}}(\mathbf{x}_T) \right| \mathrm{d}\mathbf{x}_T \leq  e^{M_A|T-t|} \int_{\mathcal{G}_T} \left|\Psi(\mathbf{x}_T)\right|^P \mathrm{d}\mathbf{x}_T, 
  \]
  and thus
  \[
    e^{-M_A|T-t|/P} \|\Psi(\cdot~;T)\|_{L^P(\mathcal{G}_T)} \leq \|\psi(\cdot\ , t)\|_{L^P(\mathcal{G}_t)} \leq e^{M_A|T-t|/P} \|\Psi(\cdot~;T)\|_{L^P(\mathcal{G}_T)}, ~1 \leq P < \infty.
  \]
\end{proof}

{\color{red}{
\begin{remark}
  When $\mathcal{G}_T=\Omega\subset\mathbb{R}^d$, $D^{\mathbf{A}}_{T \to t}(\mathcal{G}_T)=\widetilde{\Omega}(t)$;
  When $\mathcal{G}_T=\Omega\subset\mathbb{R}^d$, $D^{\mathbf{A}}_{T \to t}(\mathcal{G}_T)=\mathbb{R}^d$. 
\end{remark}
}}
When $\nabla \cdot \mathbf{A} = 0$(incompressible flow), $M_A$ diminishes to $0$ and the inequalities in Lemma \ref{Lemma-psi-Omega-Rd-LP} become strict equalities.  
Note that in Lemma \ref{Lemma-psi-Omega-Rd-LP}, the control factor $e^{\pm M_A|T-t|/P}$ is \textbf{global} and \textbf{independent of $\mathcal{G}_t$}, 
ensuring that the inequalities remain valid even when considering the entire space $\mathbb{R}^d$ with infinite Lebesgue measure. 
In fact, this factor can be conservatively bounded by $e^{\pm M_A T / P}$, making the control relationship \textbf{time-independent}. 
This lemma form the foundation for key estimates in subsequent proofs. 

\begin{lemma}[Bounded Linear Functional on \(H(\mathbb{R}^d)\)]\label{Lemma-f-H*}
  Assume \(\mathfrak{U} \in L^2(\mathbb{R}^d)\), \(t_p\in[0,T)\), \(\Delta t > 0\) and satisfies \(t_p + \Delta t \in (0,T]\). For any \(W \in H(\mathbb{R}^d)\), define the functional \(f\) as follows:
  \[
  \begin{aligned}
  & f: \ {\color{black}{H(\mathbb{R}^d) \longrightarrow \mathbb{R}}}, \\
  & {\color{red}{f(W; \mathfrak{U}, \mathbf{A}, t_p, \Delta t) = \int_{\mathbb{R}^d} \mathfrak{U}  V(\mathbf{x}; W, \mathbf{A}, t_p, \Delta t) \, \mathrm{d}\mathbf{x}, }}\\
  & V(\mathbf{x}; W, \mathbf{A}, t_p, \Delta t) = W\left(\mathbf{x} + \int_{t_p}^{t_p + \Delta t} \mathbf{A}(\mathbf{s}(\tau), \tau) \, \mathrm{d}\tau\right), 
  \end{aligned}
  \]
  where $\frac{\mathrm{d}\mathbf{s}}{\mathrm{d}\tau}=\mathbf{A}(\mathbf{s},\tau)$ with $\mathbf{s}(t_p)=\mathbf{x}$.  
  Then, \(\boxed{f \in H^{\prime}(\mathbb{R}^d)}\), i.e., \(f\) is a \uwave{bounded} \underline{linear} operator on \(H(\mathbb{R}^d)\). 
\end{lemma}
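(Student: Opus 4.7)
The plan is to verify the two defining properties of a bounded linear functional separately, exploiting the fact that the map $W \mapsto V(\cdot;W,\mathbf{A},t_p,\Delta t)$ is precisely the evolution operator $\Phi^{\mathbf{A}}_{t_p+\Delta t \to t_p}[\cdot]$ introduced in~\eqref{eq-psi-evolution-operator}, with the roles of ``$T$'' and ``$t$'' played by $t_p+\Delta t$ and $t_p$ respectively. Once this identification is made, everything reduces to an estimate already proved in Lemma~\ref{Lemma-psi-Omega-Rd-LP}.

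First I would establish linearity. Writing $V = \Phi^{\mathbf{A}}_{t_p+\Delta t \to t_p}[W]$ makes it transparent that $V$ depends linearly on $W$ (the flow-map argument $\mathbf{x}+\int_{t_p}^{t_p+\Delta t}\mathbf{A}(\mathbf{s}(\tau),\tau)\,\mathrm{d}\tau$ is independent of $W$), so for any $W_1,W_2\in H(\mathbb{R}^d)$ and scalars $\alpha,\beta\in\mathbb{R}$ one has pointwise $V(\cdot;\alpha W_1+\beta W_2) = \alpha V(\cdot;W_1)+\beta V(\cdot;W_2)$, and linearity of the Lebesgue integral then gives $f(\alpha W_1+\beta W_2) = \alpha f(W_1)+\beta f(W_2)$.

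Next I would handle boundedness by a single application of the Cauchy--Schwarz inequality on $L^2(\mathbb{R}^d)$ followed by Lemma~\ref{Lemma-psi-Omega-Rd-LP}. Concretely,
\begin{align*}
  |f(W)| \;\leq\; \|\mathfrak{U}\|_{L^2(\mathbb{R}^d)}\,\|V(\cdot;W,\mathbf{A},t_p,\Delta t)\|_{L^2(\mathbb{R}^d)}.
\end{align*}
Applying Lemma~\ref{Lemma-psi-Omega-Rd-LP} with $p=2$, $\mathcal{G}_T=\mathbb{R}^d$ (so $\mathcal{G}_t=\mathbb{R}^d$ as well, since the flow map is a diffeomorphism of $\mathbb{R}^d$), and the time interval $[t_p,t_p+\Delta t]\subset[0,T]$, I obtain
\begin{align*}
  \|V(\cdot;W,\mathbf{A},t_p,\Delta t)\|_{L^2(\mathbb{R}^d)} \;\leq\; e^{M_A |\Delta t|/2}\,\|W\|_{L^2(\mathbb{R}^d)} \;\leq\; e^{M_A T/2}\,\|W\|_{L^2(\mathbb{R}^d)}.
\end{align*}
Combining these yields $|f(W)| \leq e^{M_A T/2}\,\|\mathfrak{U}\|_{L^2(\mathbb{R}^d)}\,\|W\|_{L^2(\mathbb{R}^d)}$, which is the required boundedness and also supplies an explicit $\Delta t$-uniform operator-norm bound $\|f\|_{H'(\mathbb{R}^d)} \leq e^{M_A T/2}\|\mathfrak{U}\|_{L^2(\mathbb{R}^d)}$.

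There is no genuine obstacle in the argument; the only point that deserves care is the bookkeeping of the time endpoints when invoking Lemma~\ref{Lemma-psi-Omega-Rd-LP}, since here the ``terminal'' time of the evolution is $t_p+\Delta t$ rather than $T$. One must verify that the hypotheses of that lemma---continuous differentiability of $\mathbf{A}$, the uniform gradient bound~\eqref{eq-gardA-UniformBoundedness} (which gives invertibility of $\operatorname{D}^{\mathbf{A}}_{t_p+\Delta t \to t_p}$), and the uniform divergence bound~\eqref{eq-div-UniformBoundedness}---are inherited on the subinterval $[t_p,t_p+\Delta t]\subset[0,T]$ with the same constants $L_A$ and $M_A$, which is immediate. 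After this check, the proof is complete.
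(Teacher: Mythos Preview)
Your proposal is correct and follows essentially the same route as the paper: verify linearity directly from the fact that the flow-map argument is independent of $W$, then obtain boundedness by Cauchy--Schwarz together with Lemma~\ref{Lemma-psi-Omega-Rd-LP} applied on $\mathcal{G}_T=\mathbb{R}^d$ with $P=2$. Your constant $e^{M_A T/2}$ is in fact the sharper one delivered by Lemma~\ref{Lemma-psi-Omega-Rd-LP}; the paper records the looser bound $e^{M_A T}$, but this is immaterial for the conclusion $f\in H'(\mathbb{R}^d)$.
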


\begin{proof}\hfill\par
  \begin{itemize}
    \item \textbf{Linearity}: For all \(\phi, \varphi \in H(\mathbb{R}^d)\) and \(\lambda, \mu \in \mathbb{R}\),
    {\footnotesize
    \begin{align*}
      f(\lambda \phi + \mu \varphi; \mathfrak{U}, \mathbf{A}, t_p, \Delta t)
      &= \int_{\mathbb{R}^d} \mathfrak{U}  V(\mathbf{x}; \lambda \phi + \mu \varphi, \mathbf{A}, t_p, \Delta t) \, \mathrm{d}\mathbf{x} \\
      &= \int_{\mathbb{R}^d} \mathfrak{U}  (\lambda \phi + \mu \varphi)\left(\mathbf{x} + \int_{t_p}^{t_p + \Delta t} \mathbf{A}(\mathbf{s}(\tau), \tau) \, \mathrm{d}\tau\right) \, \mathrm{d}\mathbf{x} \\
      (H(\mathbb{R}^d)\text{ is a linear space}) &= \int_{\mathbb{R}^d} \mathfrak{U} \left(\lambda \phi \left(\mathbf{x} + \int_{t_p}^{t_p + \Delta t} \mathbf{A}(\mathbf{s}(\tau), \tau) \, \mathrm{d}\tau\right) + \mu \varphi\left(\mathbf{x} + \int_{t_p}^{t_p + \Delta t} \mathbf{A}(\mathbf{s}(\tau), \tau) \, \mathrm{d}\tau\right)\right) \, \mathrm{d}\mathbf{x} \\
      &= \lambda \int_{\mathbb{R}^d} \mathfrak{U} \phi \left(\mathbf{x} + \int_{t_p}^{t_p + \Delta t} \mathbf{A}(\mathbf{s}(\tau), \tau) \, \mathrm{d}\tau\right) \, \mathrm{d}\mathbf{x}
       + \mu \int_{\mathbb{R}^d} \mathfrak{U} \varphi\left(\mathbf{x} + \int_{t_p}^{t_p + \Delta t} \mathbf{A}(\mathbf{s}(\tau), \tau) \, \mathrm{d}\tau\right) \, \mathrm{d}\mathbf{x} \\
      &= \lambda f(\phi; \mathfrak{U}, \mathbf{A}, t_p, \Delta t) + \mu f(\varphi; \mathfrak{U}, \mathbf{A}, t_p, \Delta t).
    \end{align*}
    }
    \item \textbf{Boundedness (Continuity)}: For all \(W \in H(\mathbb{R}^d)\),
    \begin{align*}
      \left|f(W; \mathfrak{U}, \mathbf{A}, t_p, \Delta t)\right| 
      &\leq \int_{\mathbb{R}^d} \left|\mathfrak{U}  V(\mathbf{x}; W, \mathbf{A}, t_p, \Delta t)\right| \, \mathrm{d}\mathbf{x} \\
      &\leq \|\mathfrak{U}\|_{L^2(\mathbb{R}^d)} \cdot \|V(\mathbf{x}; W, \mathbf{A}, t_p, \Delta t)\|_{L^2(\mathbb{R}^d)} \\
      (\text{Lemma }\ref{Lemma-psi-Omega-Rd-LP}) &\leq \|\mathfrak{U}\|_{L^2(\mathbb{R}^d)} \cdot e^{M_A \Delta t} \|W\|_{L^2(\mathbb{R}^d)} \\
      &\leq e^{M_A T}\|\mathfrak{U}\|_{L^2(\mathbb{R}^d)} \cdot \|W\|_{L^2(\mathbb{R}^d)}. 
    \end{align*}
  \end{itemize}  
\end{proof}
\par \noindent
Lemma \ref{Lemma-f-H*} establishes the foundation for applying the Riesz Representation Theorem in the subsequent existence proof.
\par 
\hfill \par
Finally, we introduce a lemma established in previous work for enhancing $L^p$-integrability, with its proof available in \cite{ref-CHenSX18-lemma}: 
\begin{lemma}[Uniformly bounded in all $L^p$-norms - such functions must actually be essentially bounded]\label{lemma-Lp-Linfty}
  If $u \in L^p(\Omega)$ and $\|u\|_{L^p(\Omega)} \leq C$ holds for all $1 \leq p < \infty$, 
  then $u \in L^{\infty}(\Omega)$ and $\|u\|_{L^p(\Omega)} \xrightarrow{p \to \infty} \|u\|_{L^{\infty}(\Omega)}$.
\end{lemma}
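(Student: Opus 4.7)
The plan is to establish the two assertions in sequence: first that the uniform $L^p$-bound forces $u\in L^\infty(\Omega)$ with essential supremum controlled by $C$, and then that the full $L^p$-norms converge to the $L^\infty$-norm as $p\to\infty$. Throughout, the hypothesis $u\in L^1(\Omega)$ (obtained as the $p=1$ case) is crucial, since it supplies finite measure of super-level sets via Markov's inequality; this is what lets the argument work without any a priori assumption that $\Omega$ has finite Lebesgue measure.

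To prove that $\|u\|_{L^\infty(\Omega)}\le C$, I would argue by contradiction. Suppose $\|u\|_{L^\infty(\Omega)}>C$ and fix $\lambda$ strictly between $C$ and $\|u\|_{L^\infty(\Omega)}$. By the definition of essential supremum the super-level set $E_\lambda:=\{x\in\Omega:|u(x)|>\lambda\}$ has strictly positive measure, while by Markov's inequality $|E_\lambda|\le \|u\|_{L^1(\Omega)}/\lambda\le C/\lambda<\infty$. Then
\[
\|u\|_{L^p(\Omega)}^p\;\ge\;\int_{E_\lambda}|u|^p\,\mathrm{d}x\;\ge\;\lambda^p\,|E_\lambda|,
\]
so $\|u\|_{L^p(\Omega)}\ge \lambda\,|E_\lambda|^{1/p}\to \lambda>C$ as $p\to\infty$, contradicting the uniform hypothesis. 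Hence $\|u\|_{L^\infty(\Omega)}\le C$, and in particular $u\in L^\infty(\Omega)$.

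For the convergence, set $M:=\|u\|_{L^\infty(\Omega)}$. The lower estimate $\liminf_{p\to\infty}\|u\|_{L^p(\Omega)}\ge M$ is obtained by repeating the super-level set argument with an arbitrary $\lambda<M$, using $|E_\lambda|^{1/p}\to 1$, and then letting $\lambda\nearrow M$. For the matching upper estimate I would invoke the pointwise bound $|u|^p\le M^{p-1}|u|$, valid almost everywhere, which integrates to $\|u\|_{L^p(\Omega)}^p\le M^{p-1}\|u\|_{L^1(\Omega)}$; taking $p$-th roots gives $\|u\|_{L^p(\Omega)}\le M^{1-1/p}\|u\|_{L^1(\Omega)}^{1/p}\to M$ as $p\to\infty$ (the case $u\equiv 0$ being trivial). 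Combining the two bounds yields $\lim_{p\to\infty}\|u\|_{L^p(\Omega)}=M$.

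The only genuinely subtle point is that $\Omega$ is not assumed to have finite measure, so neither the lazy upper bound $\|u\|_{L^p(\Omega)}\le M\,|\Omega|^{1/p}$ nor a lower bound via $|E_\lambda|^{1/p}$ works directly in full generality. Both obstructions are bypassed by leaning on the $p=1$ case of the hypothesis: $u\in L^1(\Omega)$ simultaneously forces super-level sets to have finite measure (unlocking the lower bound) and supplies the integrable majorant $M^{p-1}|u|$ for $|u|^p$ (unlocking the upper bound). Once this observation is in hand, the rest of the argument is a routine limit calculation.
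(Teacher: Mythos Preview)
Your argument is correct and complete. Note, however, that the paper does not actually prove this lemma: it merely states it and refers the reader to an external source (Chen, \textit{Introduction to Modern Partial Differential Equations}, 2018) for the proof. Your super-level set contradiction for $\|u\|_{L^\infty}\le C$ together with the squeeze via $|u|^p\le M^{p-1}|u|$ is the standard route, and your remark that the $p=1$ hypothesis (rather than $|\Omega|<\infty$) is what makes both the lower and upper estimates go through is a genuinely useful clarification---slightly more general than required, in fact, since the paper only ever applies the lemma on the finite-measure interval $[0,T]$.
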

\par \noindent
This lemma will be applied in Section \ref{sec-WeakSol-regularity-t-Linfty} to enhance the temporal integrability of solutions to the integral invariant model from $L^1$ to $L^{\infty}$.

\section{Well-posedness of Integral Invariant Model}\label{sec-WeakSol-Existence-Uniqueness-Stability}
\subsection{Existence of The Solution to Integral Invariant Model\eqref{eq-ComprehensiveIntegralInvariantModel}}
\begin{theorem}[Existence of The Solution to The Integral Invariant Model\eqref{eq-ComprehensiveIntegralInvariantModel}]\label{Thm-WeakSolution-Existence}
  There exists \(U(\mathbf{x}, t) \in L^{1}([0, T], H(\widetilde{\Omega}(t)))\) that satisfies the integral invariant model\eqref{eq-ComprehensiveIntegralInvariantModel}.  
\end{theorem}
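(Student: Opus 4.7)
The plan is to construct the solution $U(\cdot,t^{*})$ pointwise in $t^{*}\in[0,T]$ via the Riesz representation theorem and then verify that the resulting time-parametrized family belongs to the Bochner space $L^{1}([0,T],H(\widetilde{\Omega}(t)))$. The strategy rests on the observation that, once the time-dependent test function $\psi$ is evolved explicitly by the operator $\operatorname{\Phi}^{\mathbf{A}}_{T\to t}[\cdot]$ rather than obtained from the adjoint equation, the equivalent form \eqref{eq-WeakSol-EquivalentForm2} is linear in $\psi(\cdot,t^{*})$ and so naturally identifies a bounded functional on $L^{2}(\mathbb{R}^d)$ whose Riesz representative is a natural candidate for $U(\cdot,t^{*})$.

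I would proceed as follows. First, I fix $t^{*}\in[0,T]$ and use Lemma~\ref{Lemma-psi-supp} together with zero-extension of $\widetilde{U}_0$ to rewrite both sides of \eqref{eq-WeakSol-EquivalentForm2} as integrals over $\mathbb{R}^d$. Invoking the composition property \eqref{eq-evolution-operator-composition-property} gives $\psi(\cdot,0)=\operatorname{\Phi}^{\mathbf{A}}_{t^{*}\to 0}[\psi(\cdot,t^{*})]$, so the right-hand side defines, as a function of $W:=\psi(\cdot,t^{*})$, the functional
\begin{equation*}
f_{t^{*}}(W):=\int_{\mathbb{R}^d}\widetilde{U}_{0}(\mathbf{x})\,\operatorname{\Phi}^{\mathbf{A}}_{t^{*}\to 0}[W](\mathbf{x})\,\mathrm{d}\mathbf{x}.
\end{equation*}
This is exactly the functional treated in Lemma~\ref{Lemma-f-H*} (with $\mathfrak{U}=\widetilde{U}_{0}$, $t_p=0$, $\Delta t=t^{*}$), hence $f_{t^{*}}\in H'(\mathbb{R}^d)$ with the uniform bound $\|f_{t^{*}}\|_{H'}\leq e^{M_A T}\|\widetilde{U}_{0}\|_{L^2(\mathbb{R}^d)}$. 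The Riesz representation theorem then yields a unique $\mathcal{U}(t^{*})\in L^{2}(\mathbb{R}^d)$ with the same norm bound, and I set
\begin{equation*}
U(\mathbf{x},t^{*}):=\mathcal{U}(t^{*})(\mathbf{x})\,\mathbbm{1}_{\widetilde{\Omega}(t^{*})}(\mathbf{x}).
\end{equation*}
Testing the Riesz identity against $W=\psi(\cdot,t^{*})\in H_{0}(\widetilde{\Omega}(t^{*}))$ and using the compact support of $\psi$ converts the $\mathbb{R}^d$-identity back into \eqref{eq-WeakSol-EquivalentForm2} on the moving domain, while evaluating at $t^{*}=0$ (where $\operatorname{\Phi}^{\mathbf{A}}_{0\to 0}$ is the identity) together with uniqueness of the Riesz representative gives the initial condition $U(\cdot,0)=\widetilde{U}_{0}$.

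The remaining task is to verify $U\in L^{1}([0,T],H(\widetilde{\Omega}(t)))$. The bound $\|U(\cdot,t^{*})\|_{L^{2}(\widetilde{\Omega}(t^{*}))}\leq e^{M_A T}\|\widetilde{U}_{0}\|_{L^{2}(\widetilde{\Omega}(0))}$ is already uniform in $t^{*}$, so the only issue is the measurability of $t\mapsto\|U(\cdot,t)\|_{L^{2}(\widetilde{\Omega}(t))}$; I expect this to be the main obstacle, because the ambient domain $\widetilde{\Omega}(t)$ itself varies with $t$ and the standard fixed-Banach-space Bochner framework does not apply directly. I would address it by showing that for each fixed $W\in L^{2}(\mathbb{R}^d)$ the map $t\mapsto\operatorname{\Phi}^{\mathbf{A}}_{t\to 0}[W]$ is continuous in $L^{2}(\mathbb{R}^d)$ (exploiting continuity of the flow together with Lemma~\ref{Lemma-psi-Omega-Rd-LP}), so that $t\mapsto f_{t}(W)$ is continuous and the Riesz representative $t\mapsto\mathcal{U}(t)$ is weakly measurable into $L^{2}(\mathbb{R}^d)$; separability of $L^{2}(\mathbb{R}^d)$ and Pettis' theorem then upgrade this to strong measurability, which transfers to $U(\cdot,t)=\mathcal{U}(t)\mathbbm{1}_{\widetilde{\Omega}(t)}$. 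Combined with the uniform $L^{2}$-bound, this yields $U\in L^{\infty}([0,T],H(\widetilde{\Omega}(t)))\subset L^{1}([0,T],H(\widetilde{\Omega}(t)))$, completing the existence proof; as a byproduct one also obtains the intermediate estimate $\|U(\cdot,t^{*})\|_{L^{2}(\widetilde{\Omega}(t^{*}))}^{q}\lesssim\|\widetilde{U}_{0}\|_{L^{2}(\widetilde{\Omega}(0))}^{q}$ for all $2\leq q<\infty$, which the authors later exploit for the $L^{\infty}$-in-time regularity upgrade.
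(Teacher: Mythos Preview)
Your proposal is correct and follows essentially the same route as the paper: fix $t^{*}$, apply the Riesz representation theorem to the functional $f_{t^{*}}$ on $H(\mathbb{R}^d)$ (via Lemma~\ref{Lemma-f-H*}), restrict the representative to $\widetilde{\Omega}(t^{*})$ by multiplication with the characteristic function, and use the uniform norm bound to conclude Bochner integrability. The only notable difference is that you explicitly address the measurability of $t\mapsto U(\cdot,t)$ via weak continuity and Pettis' theorem---a point the paper passes over in silence---and your direct $L^{\infty}$-in-time conclusion anticipates what the paper postpones to Section~\ref{sec-WeakSol-regularity-t-Linfty}.
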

\begin{proof}
{\color{red}{
Based on the equivalent definition \eqref{eq-WeakSol-EquivalentForm2} of the definition \ref{def-WeakSolution}, it is sufficient to show  
\begin{align}\label{eq-InertialIntegralInvariant-t*-t0}
    \underbrace{\int_{\widetilde{\Omega}^{*}} U(\mathbf{x},t^*)\psi^{*} \mathrm{~d}\mathbf{x}}_{LHS_{*}} = \underbrace{\int_{\widetilde{\Omega}^{0}} \widetilde{U}_{0}(\mathbf{x})\psi^{0} \mathrm{~d}\mathbf{x}}_{RHS_{0}}, ~\forall t^* \in [0,T]. 
\end{align} 
}}
It should be noted that  
{\color{black}{$\widetilde{U}_{0}\in L^2(\widetilde{\Omega}^0)$ and is defined on $\mathbb{R}^d$ with compact support 
($\operatorname{supp}\{\widetilde{U}_{0}\}\subset\subset\widetilde{\Omega}_0$). }}  
\begin{itemize}
  \item \textbf{Step1.\ For any fixed time $t^*\in(0,T]$, the function $U^*(\mathbf{x})$ is constructed via the Riesz representation theorem to satisfy equation \eqref{eq-WeakSol-EquivalentForm2}. }
    \par
    For $\Psi\in H_0(\Omega)$, by combining equation \eqref{eq-psi-Psi} with Lemmas \ref{Lemma-psi-supp} and \ref{Lemma-psi-Omega-Rd-LP}, it is shown that
    \begin{align}
      \psi^{0}\in H_0(\widetilde{\Omega}^{0}), ~\psi^{*}\in H_0(\widetilde{\Omega}^{*}).
    \end{align}
    It should be emphasized that $\Psi$, $\psi^{0}$, and $\psi^{*}$ are defined on the whole space $\mathbb{R}^d$ in a compactly supported manner (via zero extension), namely
    \begin{align}
      \Psi\in H_0(\Omega)\subset H(\mathbb{R}^d), ~\psi^{0}\in H_0(\widetilde{\Omega}^{0})\subset H(\mathbb{R}^d), ~\psi^{*}\in H_0(\widetilde{\Omega}^{*})\subset H(\mathbb{R}^d). 
    \end{align}
    From equation \eqref{eq-psi-t1-psi-t2}, it is obtained that
    \begin{align}
        \psi^0=\psi^*\left(\mathbf{x}+\int_{0}^{t^*}\mathbf{A}(\mathbf{s}(\tau),\tau)\mathrm{~d}\tau\right), ~\mathbf{x}\in\widetilde{\Omega}^{0}, 
    \end{align} 
    which implies
    \begin{align}
      RHS_{0}
      =\int_{\widetilde{\Omega}^{0}} \widetilde{U}_{0}\psi^{*}\left(\mathbf{x}+\int_{0}^{t^*}\mathbf{A}(\mathbf{s}(\tau),\tau)\mathrm{~d}\tau\right) \mathrm{~d}\mathbf{x}
      =\int_{\widetilde{\Omega}^{0}} \widetilde{U}_{0}\psi^{0}(\mathbf{x};\psi^{*},\mathbf{A},0,t^*) \mathrm{~d}\mathbf{x}. 
    \end{align}
    Therefore, the following functional can be introduced: for all $w \in H(\mathbb{R}^d)$, 
    \begin{align}\label{eq-f0(omega)}
      f^0(\omega;\widetilde{U}_{0},\mathbf{A},0,t^*)
      :=\int_{{\color{black}{\mathbb{R}^d}}} \widetilde{U}_{0}\psi^{0}(\mathbf{x};\omega,\mathbf{A},0,t^*) \mathrm{~d}\mathbf{x}
      =\int_{{\color{black}{\mathbb{R}^d}}} \widetilde{U}_{0}\omega\left(\mathbf{x}+\int_{0}^{t^*}\mathbf{A}(\mathbf{s}(\tau),\tau)\mathrm{~d}\tau\right) \mathrm{~d}\mathbf{x}.  
    \end{align}
    According to Lemma \ref{Lemma-f-H*}, the functional $f^0$ is confirmed to belong to $H^{\prime}(\mathbb{R}^d)$. The {\color{black}{Riesz representation theorem}} is therefore applicable, guaranteeing the existence of a unique $\mathcal{W}^{*}{\color{black}{\in H(\mathbb{R}^d)}}$ satisfying
    \begin{align}\label{eq-Riesz-W*-exist}
      \langle \mathcal{W}^{*},w \rangle_{L^2(\mathbb{R}^d)} = f^0(w;\widetilde{U}_{0},\mathbf{A},0,t^*), 
    \end{align}
    for all $w \in H(\mathbb{R}^d)$, with the norm identity
    \begin{align}\label{eq-Riesz-W*-norm}
      \|{\color{red}{\mathcal{W}^{*}}}\|_{L^2(\mathbb{R}^d)}=\|f^0\|_{H^{\prime}(\mathbb{R}^d)}.
    \end{align}
    \uwave{Given that $\psi^*\in H_0({\color{black}{\widetilde{\Omega}^*}})\subset H(\mathbb{R}^d)$}, the following equality is valid:
    \begin{align}\label{eq-Riesz-W*-psi*}
      \langle \mathcal{W}^{*},\psi^* \rangle_{L^2(\mathbb{R}^d)} = f^0(\psi^*;\widetilde{U}_{0},\mathbf{A},0,t^*), ~\forall \psi^*\in H_0({\color{black}{\widetilde{\Omega}^*}}).
    \end{align}
    An important observation is made that $\psi^*$ is uniquely determined by $\Psi$ via the transformation $\psi^*(\mathbf{x})=\operatorname{\Phi}^{\mathbf{A}}_{T \to t^*}[\Psi](\mathbf{x})$. The arbitrariness of $\Psi\in H_0(\Omega)\subset H(\mathbb{R}^d)$ implies that of $\psi^*$. Consequently, 
    \begin{align}
      \exists ! \mathcal{W}^{*}{\color{black}{\in H(\mathbb{R}^d)}}\ s.t.\ 
      \langle \mathcal{W}^{*},\psi^* \rangle_{L^2(\mathbb{R}^d)} = f^0(\psi^*;\widetilde{U}_{0},\mathbf{A},0,t^*), ~\forall \Psi \in H_0(\Omega).
    \end{align}
    It is observed that since $\operatorname{supp}\{\psi^*\}\subset\subset{\color{black}{\widetilde{\Omega}^{*}}}$, the following equality holds:
    \begin{align}
      \langle \mathcal{W}^{*},\psi^{*} \rangle_{L^2(\mathbb{R}^d)} = \langle \mathcal{W}^{*},\psi^{*} \rangle_{L^2({\color{black}{\widetilde{\Omega}^{*}}})}=\int_{{\color{black}{\widetilde{\Omega}^{*}}}} \mathcal{W}^{*}\psi^* \mathrm{~d}\mathbf{x};
    \end{align}  
    Similarly, since $\operatorname{supp}\{\psi^0\}\subset\subset{\color{black}{\widetilde{\Omega}^{0}}}$, we have
    \begin{align}
      f^0(\psi^*;\widetilde{U}_{0},\mathbf{A},0,t^*)=\int_{\mathbb{R}^d} \widetilde{U}_{0}\psi^{0}(\mathbf{x};\psi^{*},\mathbf{A},0,t^*) \mathrm{~d}\mathbf{x}=\int_{\widetilde{\Omega}^{0}} \widetilde{U}_{0}\psi^{0}(\mathbf{x};\psi^{*},\mathbf{A},0,t^*) \mathrm{~d}\mathbf{x}. 
    \end{align} 
    Consequently, 
    \begin{align}
    \exists ! \mathcal{W}^{*}{\color{black}{\in H(\mathbb{R}^d)}}\ \text{such that}\ 
    \int_{\color{black}{\widetilde{\Omega}^{*}}} \mathcal{W}^{*}\psi^* \mathrm{~d}\mathbf{x}=\int_{\widetilde{\Omega}^{0}} \widetilde{U}_{0}\psi^{0}(\mathbf{x};\psi^{*},\mathbf{A},0,{\color{red}{t^*}}) \mathrm{~d}\mathbf{x},\ 
    \forall \Psi \in H_0(\Omega).
    \end{align}
    The solution at time $t^*$ is consequently defined as
    \begin{align}\label{eq-Riesz-U*-W*}
      U(\mathbf{x},t^*)=\mathcal{W}^{*}(\mathbf{x})\cdot\chi_{\widetilde{\Omega}^{*}}(\mathbf{x}){\color{black}{\in H({\color{black}{\widetilde{\Omega}^{*}}})}}.
    \end{align}
    That is, by using the indicator function $\chi_{\widetilde{\Omega}^{*}}$, $\mathcal{W}^{*}$ defined on the entire space is directly restricted to the local bounded domain $\widetilde{\Omega}^{*}$. 
    Denote $U(\mathbf{x},t^*)$ simply as $U^*$ and it still satisfies Equation \eqref{eq-WeakSol-EquivalentForm2}.

  \item \textbf{Step2.\ The mapping $t^* \mapsto U^*$ defines an abstract function $\mathcal{U}(t)$, through which $U^*(\mathbf{x})$ is extended to the full solution $U(\mathbf{x},t)$. }
    \par
    The arbitrariness of $t^*$ combined with the uniqueness of the representing element $\mathcal{W}^*$ guaranteed by the Riesz representation theorem (and consequently the uniqueness of $U^*$) leads to the construction of a single-valued mapping
    \begin{align}
      \mathcal{U}:~ (0,T] \ni t^*\mapsto \mathcal{U}(t^*):=U^*\in H(\widetilde{\Omega}^{*}).  
    \end{align}
    This mapping $\mathcal{U}(\lambda)$ is referred to as an abstract function defined on $(0,T]$ taking values in the Banach space $H(\widetilde{\Omega}^{\lambda})$. 
    It should be noted that for all $\lambda\in(0,T]$, the following inequality obviously holds:
    \begin{align}
      \|\mathcal{U}(\lambda)\|_{L^2(\widetilde{\Omega}^{\lambda})}\leq\|\mathcal{W}^{\lambda}\|_{L^2(\mathbb{R}^d)}<\infty.
    \end{align}
    In fact, the mapping from $\lambda$ to $\|\mathcal{U}(\lambda)\|_{L^2(\widetilde{\Omega}^{\lambda})}$ is also established as
    \begin{equation}
      \begin{aligned}
        (0,T] &\rightarrow [0,+\infty), \\
        \lambda &\mapsto \|\mathcal{U}(\lambda)\|_{L^2(\widetilde{\Omega}^{\lambda})}. 
      \end{aligned}
    \end{equation}
  
  \item \textbf{Step3.\ {\color{red}{Determine}} the regularity of the temporal variable $t$. }
    \par
    Further regularity estimates for $\|\mathcal{U}(\lambda)\|_{L^2(\widetilde{\Omega}^{\lambda})}$ in the temporal variable are derived by revisiting equations \eqref{eq-f0(omega)}, \eqref{eq-Riesz-W*-norm}, and \eqref{eq-Riesz-U*-W*}. 
    The operator norm characterization yields
    \begin{align}
      \|f^0\|_{H^{\prime}(\mathbb{R}^d)}=\sup\left\{\left|\int_{\mathbb{R}^d} \widetilde{U}_{0}\omega\left(\mathbf{x}+\int_{0}^{t^{*}}\mathbf{A}(\mathbf{s}(\tau),\tau)\mathrm{~d}\tau\right) \mathrm{~d}\mathbf{x}\right|:\ \omega\in H(\mathbb{R}^d),\ \|\omega\|_{L^2(\mathbb{R}^d)}=1\right\}.
    \end{align}
    A key observation is made that the uniform bound
    \begin{align}
      \left|\int_{\mathbb{R}^d} \widetilde{U}_{0}\omega\left(\mathbf{x}+\int_{0}^{t^{*}}\mathbf{A}(\mathbf{s}(\tau),\tau)\mathrm{~d}\tau\right) \mathrm{~d}\mathbf{x}\right|
      &\leq
      \|\widetilde{U}_{0}\|_{L^2(\mathbb{R}^d)}\cdot e^{M_A t^* {\color{black}{/2}}} \|\omega\|_{L^2(\mathbb{R}^d)} 
      =e^{M_A t^* {\color{black}{/2}}}\|\widetilde{U}_{0}\|_{L^2(\mathbb{R}^d)} \notag \\
      &=e^{M_A t^* {\color{black}{/2}}}\|\widetilde{U}_{0}\|_{L^2(\widetilde{\Omega}^0)} \quad (\operatorname{supp}\{\widetilde{U}_{0}\}\subset\subset\widetilde{\Omega}_0) \notag \\
      &\leq e^{M_A T {\color{black}{/2}}}\|\widetilde{U}_{0}\|_{L^2(\widetilde{\Omega}^0)}
    \end{align}
    is valid. 
    {\color{red}{Here we use Lemma \ref{Lemma-psi-Omega-Rd-LP}. }}
    \\
    This leads to the fundamental estimate:
    \begin{align}\label{eq-U*-U0-L2}
      \|U^*\|_{L^2(\widetilde{\Omega}^{*})}\leq\|W^*\|_{L^2(\mathbb{R}^d)}=\|f^0\|_{H^{\prime}(\mathbb{R}^d)} \leq e^{M_A t^{*} {\color{black}{/2}}}\|\widetilde{U}_0\|_{L^2(\widetilde{\Omega}^{0})} \leq e^{M_A T {\color{black}{/2}}}\|\widetilde{U}_0\|_{L^2(\widetilde{\Omega}^{0})}<\infty, ~\forall t^*\in(0,T] 
    \end{align}
    establishing
    \begin{align}\label{eq-U-L1t-L2x}
      \|\mathcal{U}\|_{L^{1}((0,T],H(\widetilde{\Omega}^{\lambda}))} &= \int_{0}^{T}\|U^{*}\|_{L^2(\widetilde{\Omega}^{*})}\mathrm{~d}t^{*}
      \leq \int_{0}^{T}e^{M_A T {\color{black}{/2}}}\|\widetilde{U}_0\|_{L^2(\widetilde{\Omega}^{0})}\mathrm{~d}t^{*}
      = T e^{M_A T {\color{black}{/2}}} \|\widetilde{U}_0\|_{L^2(\widetilde{\Omega}^{0})} \notag \\
      &<\infty.   
    \end{align}

  \item \textbf{Step4.\ Verification of the Initial Condition. }
    \par
    In \textbf{Step 1}, by setting $t^{*}=0$, we obtain:
    \begin{align}
      \exists ! \mathcal{W}^{0} \in H(\mathbb{R}^d) \quad \text{such that} \quad 
      \int_{\widetilde{\Omega}^{0}} \mathcal{W}^{0}\psi^0  \mathrm{d}\mathbf{x} = \int_{\widetilde{\Omega}^{0}} \widetilde{U}_{0}\psi^{0}  \mathrm{d}\mathbf{x}, \quad 
      \forall \psi^0 \in H_0(\widetilde{\Omega}^{0}). 
    \end{align}
    Defining:
    \begin{align}\label{eq-Riesz-U0-W0}
      U^{0} = \mathcal{W}^{0}(\mathbf{x}) \cdot \chi_{\widetilde{\Omega}^{0}}(\mathbf{x}) \in H(\widetilde{\Omega}^{0}), 
    \end{align}
    we have:
    \begin{align}
      \int_{\widetilde{\Omega}^{0}} U^{0}\psi^0  \mathrm{d}\mathbf{x} = \int_{\widetilde{\Omega}^{0}} \widetilde{U}_{0}\psi^{0}  \mathrm{d}\mathbf{x}, \quad 
      \forall \psi^0 \in H_0(\widetilde{\Omega}^{0}),
    \end{align}
    which implies:
    \begin{align}
      \int_{\widetilde{\Omega}^{0}} (U^{0} - \widetilde{U}_{0})\psi^0  \mathrm{d}\mathbf{x} = 0, \quad 
      \forall \psi^0 \in H_0(\widetilde{\Omega}^{0}). 
    \end{align}
    By the \textbf{Fundamental Lemma of the Calculus of Variations}, we conclude:
    \[
    U^{0} \overset{\text{a.e.}}{=} \widetilde{U}_{0} \quad \text{in} \quad \widetilde{\Omega}^{0}.
    \]
    We can therefore redefine $U^0 = \widetilde{U}_{0}\in L^2(\widetilde{\Omega}^{0})$, ensuring that the abstract function $\mathcal{U}(\lambda)$ satisfies the initial condition in Definition \ref{def-WeakSolution}. 
\end{itemize}
In summary, 
\begin{align}
  \mathcal{U}(\lambda)\in L^{1}([0,T],H(\widetilde{\Omega}^{\lambda}))
\end{align}
and is identified as the solution to the integral invariant model in the sense of Definition \ref{def-WeakSolution}. 
\end{proof}
\begin{remark}[Riesz Representation Theorem]
  Let \( H \) be a Hilbert space and \( f \in H^{\prime} \) be a continuous linear functional on \( H \). Then, there exists a unique vector \( y \in H \) such that for all \( x \in H \),
  \(
  f(x) = \langle x, y \rangle,
  \)
  and the norm of \( f \) satisfies \( \|f\| = \|y\| \).
\end{remark}
\begin{remark}[Definition and Properties of Operator Norm]
  Let \(\mathcal{A}\) be a bounded linear operator from a normed space \(X\) to a normed space \(Y\). The positive number \(\|\mathcal{A}\| = \sup_{\substack{x \neq 0}} \frac{\|\mathcal{A}x\|}{\|x\|}\) is called the norm of \(\mathcal{A}\). 
  An equivalent formulation of this definition is
  \[
  \|\mathcal{A}\| = \sup \{\|\mathcal{A}x\| : x \in X, \|x\| \leq 1\} = \sup \{\|\mathcal{A}x\| : x \in X, \|x\| = 1\}.
  \]
\end{remark}
\begin{remark}[Stability of Integral Invariant Model]
  The stability property is manifested in Eq. \eqref{eq-U*-U0-L2}. 
  A generalized treatment of this stability, particularly the continuous dependence on the initial data $\widetilde{U}_0$ of the integral invariant model\eqref{eq-ComprehensiveIntegralInvariantModel}, 
  will be provided in Theorem \ref{Thm-WeakSolution-stability}, Section \ref{SubSec-Stability-Uniqueness}.   
\end{remark}
\par
In the course of proving Theorem \ref{Thm-WeakSolution-Existence}, while the uniqueness of the representing element $\mathcal{W}^*$ provided by the Riesz representation theorem ensures the uniqueness of $U^*$, 
this merely establishes that the mapping from $t^*$ to $U^*$ constitutes a single-valued mapping (i.e., a proper function in the strict sense), without precluding the potential existence of other abstract functions satisfying Definition \ref{def-WeakSolution}. 
Thus, the uniqueness of the integral invariant model's solution is to be demonstrated through its continuous dependence on initial data (stability). Accordingly, the stability analysis of the model\eqref{eq-ComprehensiveIntegralInvariantModel} in Definition \ref{def-WeakSolution} will first be conducted, after which the proof of uniqueness will be presented.

\subsection{Stability and Uniqueness of Integral Invariant Model\eqref{eq-ComprehensiveIntegralInvariantModel}}\label{SubSec-Stability-Uniqueness}
\begin{theorem}[Continuous Dependence on Initial Data (Stability) of The Integral Invariant Model\eqref{eq-ComprehensiveIntegralInvariantModel}]\label{Thm-WeakSolution-stability}
  Let \( \Omega \) be a bounded open domain in \( \mathbb{R}^d \). Define \( \widetilde{\Omega}(t) = \operatorname{D}^{\mathbf{A}}_{T\rightarrow t}(\Omega) \) for \( t \in [0, T] \). 
  Assume \( \widetilde{U}_0(\mathbf{x}), \widetilde{V}_0(\mathbf{x}) \in H_0(\widetilde{\Omega}(0)) \) with \( \widetilde{U}_0 \neq \widetilde{V}_0 \).  
  Then, the solutions \( U(\mathbf{x}, t) \) and \( V(\mathbf{x}, t) \) to the integral invariant model\eqref{eq-ComprehensiveIntegralInvariantModel} with initial data  \(\widetilde{U}_0(\mathbf{x})\) and \(\widetilde{V}_0(\mathbf{x})\) , respectively, satisfy
  \begin{align}
    \|U(\cdot\ ,t) - V(\cdot\ ,t)\|_{L^2(\widetilde{\Omega}(t))}
    \leq 
    e^{M_A T / 2} \|\widetilde{U}_{0} - \widetilde{V}_{0}\|_{L^{2}(\widetilde{\Omega}(0))}, \quad
    \forall t \in [0, T].
  \end{align}
\end{theorem}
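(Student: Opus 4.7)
My plan is to exploit the linearity of the integral invariant formulation. Set $W(\mathbf{x},t) := U(\mathbf{x},t) - V(\mathbf{x},t)$ with initial data $\widetilde{W}_{0} := \widetilde{U}_{0} - \widetilde{V}_{0} \in H_{0}(\widetilde{\Omega}(0))$. Since Eq.~\eqref{eq-Def-IIM} is linear in the unknown, $W$ itself satisfies the integral invariant model with initial data $\widetilde{W}_{0}$, and the equivalent form \eqref{eq-WeakSol-EquivalentForm2} then gives
\begin{align*}
\int_{\widetilde{\Omega}^{*}} W(\mathbf{x},t^{*})\,\psi^{*}(\mathbf{x})\, \mathrm{d}\mathbf{x} = \int_{\widetilde{\Omega}^{0}} \widetilde{W}_{0}(\mathbf{x})\,\psi^{0}(\mathbf{x})\, \mathrm{d}\mathbf{x},\quad \forall\, \Psi \in H_{0}(\Omega),\ \forall\, t^{*}\in[0,T].
\end{align*}

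The central idea is a \emph{self-referential} test-function choice: for each fixed $t^{*}\in(0,T]$, define $\Psi := \operatorname{\Phi}^{\mathbf{A}}_{t^{*}\to T}[W(\cdot,t^{*})]$, so that the time-dependent test function generated by evolving $\Psi$ backward satisfies $\psi^{*}(\mathbf{x}) = W(\mathbf{x},t^{*})$ on $\widetilde{\Omega}^{*}$, while $\psi^{0} = \operatorname{\Phi}^{\mathbf{A}}_{t^{*} \to 0}[W(\cdot,t^{*})]$ by the composition property~\eqref{eq-evolution-operator-composition-property}. To verify admissibility one must check $\Psi \in H_{0}(\Omega)$, equivalently $\mathrm{supp}\{W(\cdot,t^{*})\} \subset\subset \widetilde{\Omega}^{*}$. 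Revisiting the Riesz-representation construction in Theorem~\ref{Thm-WeakSolution-Existence} and performing the change of variables $\mathbf{y} = \operatorname{D}^{\mathbf{A}}_{0\to t^{*}}(\mathbf{x})$ inside the defining functional $f^{0}$ in Eq.~\eqref{eq-f0(omega)} shows that the representer $\mathcal{W}^{*}$, and hence $W(\cdot,t^{*})$, is supported exactly on the image $\operatorname{D}^{\mathbf{A}}_{0\to t^{*}}(\mathrm{supp}\{\widetilde{W}_{0}\})$, which is compactly contained in $\widetilde{\Omega}^{*}$ precisely because $\widetilde{W}_{0} \in H_{0}(\widetilde{\Omega}^{0})$. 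Propagating compact support through this diffeomorphism is where the hypothesis $\widetilde{U}_{0},\widetilde{V}_{0} \in H_{0}$ (rather than merely $L^{2}$) is consumed, and is the main technical subtlety of the proof.

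Plugging this test function into the display above collapses the left-hand side to $\|W(\cdot,t^{*})\|^{2}_{L^{2}(\widetilde{\Omega}^{*})}$. Applying Cauchy--Schwarz on the right-hand side and then Lemma~\ref{Lemma-psi-Omega-Rd-LP} with $t^{*}$ playing the role of the terminal time (the Jacobian/Liouville computation is the same and yields the same exponential factor) produces
\begin{align*}
\|W(\cdot,t^{*})\|^{2}_{L^{2}(\widetilde{\Omega}^{*})}
&\leq \|\widetilde{W}_{0}\|_{L^{2}(\widetilde{\Omega}^{0})}\, \|\psi^{0}\|_{L^{2}(\widetilde{\Omega}^{0})} \\
&\leq e^{M_{A} t^{*}/2}\, \|\widetilde{W}_{0}\|_{L^{2}(\widetilde{\Omega}^{0})}\, \|W(\cdot,t^{*})\|_{L^{2}(\widetilde{\Omega}^{*})}.
\end{align*}
Dividing by $\|W(\cdot,t^{*})\|_{L^{2}(\widetilde{\Omega}^{*})}$ (the zero case being trivial) and using $t^{*}\leq T$ yields the claimed estimate; uniqueness, promised in the preceding discussion, then follows at once by taking $\widetilde{V}_{0}=\widetilde{U}_{0}$.
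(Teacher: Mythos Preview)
Your argument is correct and is essentially the same as the paper's: subtract the two instances of the equivalent form~\eqref{eq-WeakSol-EquivalentForm2}, choose the self-referential test function $\Psi=\operatorname{\Phi}^{\mathbf{A}}_{t^{*}\to T}[W(\cdot,t^{*})]$ so that the left side collapses to $\|W(\cdot,t^{*})\|^{2}_{L^{2}(\widetilde{\Omega}^{*})}$, then bound the right side by Cauchy--Schwarz together with Lemma~\ref{Lemma-psi-Omega-Rd-LP}. The only real difference is in how admissibility $\Psi\in H_{0}(\Omega)$ is secured: the paper multiplies by the cutoff $\chi_{\widetilde{\Omega}(t)}$ (Eq.~\eqref{eq-Stability-testFun-Psi}) and appeals to the bijection $\Omega\leftrightarrow\widetilde{\Omega}(t)$, whereas you trace through the Riesz construction to show directly that $W(\cdot,t^{*})$ inherits compact support from $\widetilde{W}_{0}\in H_{0}(\widetilde{\Omega}^{0})$ via the flow; your route makes explicit where the $H_{0}$ hypothesis on the initial data is actually used.
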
 

\begin{proof}
  Based on the formulation \eqref{eq-WeakSol-EquivalentForm2}, which is equivalent to Definition \ref{def-WeakSolution}, we consider
  \begin{align}
  \int_{\widetilde{\Omega}(t)} U(\mathbf{x}, t) \psi(\mathbf{x}, t) \, \mathrm{d}\mathbf{x} = \int_{\widetilde{\Omega}(0)} \widetilde{U}_{0}(\mathbf{x}) \psi(\mathbf{x}, 0) \, \mathrm{d}\mathbf{x}, \quad \forall t \in [0, T].
  \end{align}
  Using \eqref{eq-psi-Psi} again, we obtain
  \begin{align}\label{eq-WeakSolution-stability-U}
  \int_{\widetilde{\Omega}(t)} U(\mathbf{x}, t) \Psi\left(\mathbf{x} + \int_{t}^{T} \mathbf{A}(\mathbf{s}(\tau), \tau) \, \mathrm{d}\tau\right) \, \mathrm{d}\mathbf{x} = \int_{\widetilde{\Omega}(0)} \widetilde{U}_{0}(\mathbf{x}) \Psi\left(\mathbf{x} + \int_{0}^{T} \mathbf{A}(\mathbf{s}(\tau), \tau) \, \mathrm{d}\tau\right) \, \mathrm{d}\mathbf{x}.
  \end{align}
  According to Theorem \ref{Thm-WeakSolution-Existence}, there exists a \(V(\mathbf{x}, t) \in L^{1}([0,T],H(\widetilde{\Omega}(t)))\) satisfying the Definition \ref{def-WeakSolution} or equivalent formulation\eqref{eq-WeakSol-EquivalentForm2} with initial value \(\widetilde{V}_0(\mathbf{x})\), i.e., for all \(t \in [0, T]\),
  \begin{align}\label{eq-WeakSolution-stability-W}
  \int_{\widetilde{\Omega}(t)} V(\mathbf{x}, t) \Psi\left(\mathbf{x} + \int_{t}^{T} \mathbf{A}(\mathbf{s}(\tau), \tau) \, \mathrm{d}\tau\right) \, \mathrm{d}\mathbf{x} = \int_{\widetilde{\Omega}(0)} \widetilde{V}_{0}(\mathbf{x}) \Psi\left(\mathbf{x} + \int_{0}^{T} \mathbf{A}(\mathbf{s}(\tau), \tau) \, \mathrm{d}\tau\right) \, \mathrm{d}\mathbf{x}.
  \end{align}
  {\color{black}{$(\text{Note: } \widetilde{U}_{0} - \widetilde{V}_{0} \text{ is the initial perturbation.})$}}
\\
Subtracting Eq. \eqref{eq-WeakSolution-stability-W} from Eq. \eqref{eq-WeakSolution-stability-U}, we obtain
{\scriptsize
\begin{align}
\underbrace{\int_{\widetilde{\Omega}(t)} (U(\mathbf{x}, t) - V(\mathbf{x}, t)) \Psi\left(\mathbf{x} + \int_{t}^{T} \mathbf{A}(\mathbf{s}(\tau), \tau) \, \mathrm{d}\tau\right) \, \mathrm{d}\mathbf{x}}_{LHS} = \underbrace{\int_{\widetilde{\Omega}(0)} (\widetilde{U}_{0}(\mathbf{x}) - \widetilde{V}_{0}(\mathbf{x})) \Psi\left(\mathbf{x} + \int_{0}^{T} \mathbf{A}(\mathbf{s}(\tau), \tau) \, \mathrm{d}\tau\right) \, \mathrm{d}\mathbf{x}}_{RHS}.
\end{align}
}
\par \noindent
For convenience, we define
\begin{align}\label{eq-Delta-t1-t2}
\Delta_{(\mathbf{A}, t, T)} := \int_{t}^{T} \mathbf{A}(\mathbf{s}(\tau), \tau) \, \mathrm{d}\tau, \quad t \in [0, T]. 
\end{align}
Regarding the variable \( t \)  as a fixed moment in time, we choose
{\color{black}{
\begin{align}\label{eq-Stability-testFun-Psi}
\Psi(\mathbf{x})=(U(\mathbf{x}-\Delta_{(\mathbf{A},t,T)}\ ,t)
-V(\mathbf{x}-\Delta_{(\mathbf{A},t,T)}\ ,t))
\cdot\chi_{{\widetilde{\Omega}(t)}}(\mathbf{x}-\Delta_{(\mathbf{A},t,T)}), \ \mathbf{x}\in\mathbb{R}^d,
\end{align}
}} 
where
\begin{align}
\chi_{\widetilde{\Omega}(t)}(\omega) =
\begin{cases}
1, & \omega \in \widetilde{\Omega}(t), \\
0, & \omega \notin \widetilde{\Omega}(t),
\end{cases}
\end{align}
and it follows that
\begin{align}
\|\Psi(\cdot,t)\|_{L^2(\Omega)} = \|U(\cdot, t) - V(\cdot, t)\|_{L^2(\widetilde{\Omega}(t))},\ \forall t\in[0,T].
\end{align}
{\color{black}{{\color{black}{Note: $\forall \mathbf{x}\in\Omega$, $\mathbf{x}-\Delta_{(\mathbf{A},t,T)}\in\widetilde{\Omega}(t)$ 
while $\forall \mathbf{x}\in\mathbb{R}^{d}\setminus\Omega$, $\mathbf{x} - \Delta_{(\mathbf{A}, t, T)} \notin \widetilde{\Omega}(t)$. }}}}
\\ 
{\color{black}{It is consequently established that $\Psi\in H_{0}(\Omega)$. }}Thus, we have
\begin{align}
LHS &= \int_{\widetilde{\Omega}(t)} (U(\mathbf{x}, t) - V(\mathbf{x}, t)) \cdot (U(\mathbf{x} + \Delta_{(\mathbf{A}, t, T)} - \Delta_{(\mathbf{A}, t, T)}, t) \notag \\
&\quad - V(\mathbf{x} + \Delta_{(\mathbf{A}, t, T)} - \Delta_{(\mathbf{A}, t, T)}, t)) \cdot \chi_{\widetilde{\Omega}(t)}(\mathbf{x} + \Delta_{(\mathbf{A}, t, T)} - \Delta_{(\mathbf{A}, t, T)}) \, \mathrm{d}\mathbf{x} \notag \\
&= \int_{\widetilde{\Omega}(t)} |U(\mathbf{x}, t) - V(\mathbf{x}, t)|^2 \cdot \chi_{\widetilde{\Omega}(t)}(\mathbf{x}) \, \mathrm{d}\mathbf{x} \notag \\
&= \int_{\widetilde{\Omega}(t)} |U(\mathbf{x}, t) - V(\mathbf{x}, t)|^2 \cdot 1 \, \mathrm{d}\mathbf{x} \notag \\
&= \|U(\cdot, t) - V(\cdot, t)\|^2_{L^2(\widetilde{\Omega}(t))}, \label{eq-LHS} \\
\hfill \notag \\
RHS &\leq \int_{\widetilde{\Omega}(0)} \left|(\widetilde{U}_{0} - \widetilde{V}_{0}) \cdot \operatorname{\Phi}^{\mathbf{A}}_{T \to 0}[\Psi](\mathbf{x})\right| \, \mathrm{d}\mathbf{x} \notag \\
&\leq \|\widetilde{U}_{0} - \widetilde{V}_{0}\|_{L^2(\widetilde{\Omega}(0))} \cdot \left\|\operatorname{\Phi}^{\mathbf{A}}_{T \to 0}[\Psi](\mathbf{x})\right\|_{L^2(\widetilde{\Omega}(0))}. \label{eq-RHS}
\end{align}
Take $P=2$ in the Lemma \ref{Lemma-psi-Omega-Rd-LP}, we have 
\[
  \left\|\operatorname{\Phi}^{\mathbf{A}}_{T \to 0}[\Psi](\mathbf{x})\right\|_{L^2(\widetilde{\Omega}(0))} \leq  e^{M_A T / 2} \|\Psi\|_{L^2(\Omega)} = e^{M_A T / 2} \|U(\cdot, t) - V(\cdot, t)\|_{L^2(\widetilde{\Omega}(t))}.
\]
Since \(LHS = RHS\), it follows that
\begin{align}\label{eq-U-fluctuation-uniform-bounded}
  \|U(\cdot, t) - V(\cdot, t)\|_{L^2(\widetilde{\Omega}(t))} \leq e^{M_A T / 2} \|\widetilde{U}_{0} - \widetilde{V}_{0}\|_{L^2(\widetilde{\Omega}(0))}, ~\forall t \in [0, T].
\end{align}
\end{proof}
\begin{remark}[$L^2$-Norm Uniform Boundedness of The Fluctuation in Solutions]
  From Equation \eqref{eq-U-fluctuation-uniform-bounded}, since the control factor $e^{M_A T / 2}$ is a constant independent of $t$, 
  the fluctuation in solutions $U(\cdot, t) - V(\cdot, t)$ of the integral invariant model \eqref{eq-ComprehensiveIntegralInvariantModel}—induced by initial value perturbations $\widetilde{U}_{0} - \widetilde{V}_{0}$—remains \textbf{uniformly bounded} in the sense of $L^2$-norm at all subsequent times. 
\end{remark}

\begin{theorem}[Uniqueness of The Solution Satisfying Definition \ref{def-WeakSolution}]
The solution $U(\mathbf{x},t)$ 
of the integral invariant model\eqref{eq-ComprehensiveIntegralInvariantModel} is unique.
\end{theorem}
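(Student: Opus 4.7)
The plan is to obtain uniqueness as an immediate corollary of the stability estimate established in Theorem \ref{Thm-WeakSolution-stability}. The argument proceeds by contradiction-free contrast: suppose $U(\mathbf{x},t)$ and $V(\mathbf{x},t)$ are two solutions to the integral invariant model \eqref{eq-ComprehensiveIntegralInvariantModel}, both in $L^{1}([0,T],H(\widetilde{\Omega}(t)))$, with the \emph{same} initial datum $\widetilde{U}_0 = \widetilde{V}_0 \in H_0(\widetilde{\Omega}(0))$.

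Since both $U$ and $V$ satisfy Definition \ref{def-WeakSolution} with identical initial data, the hypotheses of Theorem \ref{Thm-WeakSolution-stability} are met (with the trivial initial perturbation $\widetilde{U}_0 - \widetilde{V}_0 \equiv 0$). Direct substitution into the stability inequality \eqref{eq-U-fluctuation-uniform-bounded} then yields
\begin{align*}
\|U(\cdot,t) - V(\cdot,t)\|_{L^2(\widetilde{\Omega}(t))} \leq e^{M_A T/2}\, \|\widetilde{U}_0 - \widetilde{V}_0\|_{L^2(\widetilde{\Omega}(0))} = 0, \quad \forall t \in [0,T].
\end{align*}
Hence $U(\cdot,t) = V(\cdot,t)$ almost everywhere in $\widetilde{\Omega}(t)$ for every $t \in [0,T]$, which is exactly uniqueness in the $L^{1}([0,T],H(\widetilde{\Omega}(t)))$ sense.

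There is essentially no obstacle here, since the substantive work has already been carried out in the stability argument, where the clever choice of test function \eqref{eq-Stability-testFun-Psi}---namely the characteristic-shifted difference of the two candidate solutions---converted the integral identity into a pointwise $L^2$ control. The only matter worth flagging is a clean justification that the test function $\Psi$ constructed in \eqref{eq-Stability-testFun-Psi} indeed lies in $H_0(\Omega)$ in this degenerate case (where it may vanish identically if $U \equiv V$), which is trivial because the zero function is an admissible element of $H_0(\Omega)$ and the identity $LHS = RHS = 0$ still closes the argument. I would conclude by remarking that uniqueness, together with the existence result of Theorem \ref{Thm-WeakSolution-Existence} and the continuous dependence of Theorem \ref{Thm-WeakSolution-stability}, completes the well-posedness of the integral invariant model in the sense of Hadamard.
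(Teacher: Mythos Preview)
Your proposal is correct and follows essentially the same route as the paper: both derive uniqueness as an immediate corollary of the stability estimate in Theorem~\ref{Thm-WeakSolution-stability} by setting $\widetilde{U}_0=\widetilde{V}_0$ and reading off $\|U(\cdot,t)-V(\cdot,t)\|_{L^2(\widetilde{\Omega}(t))}\le 0$ for all $t\in[0,T]$. Your additional remark on the degenerate test function and the closing comment on Hadamard well-posedness are harmless elaborations of the same one-line argument.
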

\begin{proof}{\color{red}{{
  The uniqueness follows directly from the stability estimate in Theorem \ref{Thm-WeakSolution-stability}. By setting $\widetilde{U}_{0} = \widetilde{V}_{0}$, we obtain $\|U(\cdot, t) - V(\cdot, t)\|_{L^2(\widetilde{\Omega}(t))} \leq 0$ for all $t \in [0, T]$, which implies $U(x, t) = V(x, t)$ almost everywhere in $\widetilde{Q}_T$. 
  }}}
\end{proof}

\section{Higher $L^q(1 \leq q \leq \infty)$-Temporal Integrability of Solutions to the Integral Invariant Model on $[0,T]$}\label{sec-WeakSol-regularity-t-Linfty}
\begin{theorem}\label{Thm-U0-L2x-U-LqtL2x}
  When $\widetilde{U}_0 \in L^2(\widetilde{\Omega}^{0})$, the solution of the integral invariant model \eqref{eq-ComprehensiveIntegralInvariantModel} can achieve higher integrability on $[0,T]$, specifically satisfying
  \begin{align}
    U(\mathbf{x},t) \in L^q([0,T],H(\widetilde{\Omega}(t))), ~1 \leq q \leq \infty, 
  \end{align} 
  where 
  \begin{align}
    \|U(\mathbf{x},t)\|_{L^{\infty}([0,T],H(\widetilde{\Omega}(t)))}=\lim_{q \to \infty}\|U(\mathbf{x},t)\|_{L^{q}([0,T],H(\widetilde{\Omega}(t)))}.   
  \end{align}
\end{theorem}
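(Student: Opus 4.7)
The plan is to bootstrap the pointwise-in-time estimate $\|U(\cdot,t)\|_{L^2(\widetilde{\Omega}(t))} \le e^{M_A T/2}\|\widetilde{U}_0\|_{L^2(\widetilde{\Omega}^{0})}$ already established as \eqref{eq-U*-U0-L2} in the proof of Theorem \ref{Thm-WeakSolution-Existence}, all the way up to an $L^{\infty}$-in-time statement via Lemma \ref{lemma-Lp-Linfty}. The key observation is that the right-hand side of \eqref{eq-U*-U0-L2} is independent of $t^{*}$, so it is not merely an integrable envelope but an essentially uniform ceiling for $t \mapsto \|U(\cdot,t)\|_{L^2(\widetilde{\Omega}(t))}$ on $[0,T]$.

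First, for any exponent $1 \le q < \infty$, I would raise \eqref{eq-U*-U0-L2} to the $q$-th power and integrate over $[0,T]$:
\[
\|U\|_{L^q([0,T],H(\widetilde{\Omega}(t)))}
= \left( \int_{0}^{T}\|U(\cdot,t)\|_{L^2(\widetilde{\Omega}(t))}^{q}\,\mathrm{d}t \right)^{1/q}
\le T^{1/q}\, e^{M_A T/2}\|\widetilde{U}_0\|_{L^2(\widetilde{\Omega}^{0})}.
\]
This already promotes the $L^{1}$-in-time conclusion of Theorem \ref{Thm-WeakSolution-Existence} to $U \in L^{q}([0,T],H(\widetilde{\Omega}(t)))$ for every finite $q$, consistent with the intermediate estimate highlighted in the introduction.

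Next, since $T^{1/q} \le \max\{1,T\}$ uniformly in $q$, the scalar function $g(t):=\|U(\cdot,t)\|_{L^2(\widetilde{\Omega}(t))}$ lies in $L^{q}([0,T])$ for every $1 \le q < \infty$ with a uniform bound on $\|g\|_{L^{q}([0,T])}$ in $q$. I would then invoke Lemma \ref{lemma-Lp-Linfty} applied to $g$ on the one-dimensional domain $[0,T]$, obtaining $g \in L^{\infty}([0,T])$ together with $\|g\|_{L^{\infty}([0,T])} = \lim_{q\to\infty}\|g\|_{L^{q}([0,T])}$. By Definition \ref{def-Lq-t-Lp-x-FunSpace} of the Bochner norm, this is exactly the claimed identity
\[
\|U(\mathbf{x},t)\|_{L^{\infty}([0,T],H(\widetilde{\Omega}(t)))}
= \lim_{q \to \infty}\|U(\mathbf{x},t)\|_{L^{q}([0,T],H(\widetilde{\Omega}(t)))},
\]
and passing to the limit in the displayed inequality also yields the sharp bound $\|U\|_{L^{\infty}([0,T],H(\widetilde{\Omega}(t)))} \le e^{M_A T/2}\|\widetilde{U}_0\|_{L^2(\widetilde{\Omega}^{0})}$.

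The steps themselves are routine once the pointwise-in-time control \eqref{eq-U*-U0-L2} is in hand; I do not anticipate any substantive analytical obstacle. The only conceptual care required is the recognition that Lemma \ref{lemma-Lp-Linfty} is a statement about scalar $L^{p}$ functions on a general domain, so it should be applied not directly to the Bochner-valued object $U$ but rather to its scalar section $g(t) = \|U(\cdot,t)\|_{L^{2}(\widetilde{\Omega}(t))}$, which by definition of the Bochner norm \eqref{eq-norm-Lqt-Lpx} is precisely the scalar quantity whose $L^{q}([0,T])$ norm equals $\|U\|_{L^{q}([0,T],H(\widetilde{\Omega}(t)))}$. Measurability of $g$ in $t$ is inherited from the membership $U \in L^{1}([0,T],H(\widetilde{\Omega}(t)))$ already secured in Theorem \ref{Thm-WeakSolution-Existence}, so no additional regularity work on the time-dependent moving domain $\widetilde{\Omega}(t)$ is needed.
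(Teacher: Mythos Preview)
Your proposal is correct and follows essentially the same route as the paper: raise the pointwise-in-time bound \eqref{eq-U*-U0-L2} to the $q$-th power, integrate to obtain $\|U\|_{L^q_tL^2_x}\le T^{1/q}e^{M_AT/2}\|\widetilde{U}_0\|_{L^2}$, observe that this is uniformly bounded in $q$, and then invoke Lemma~\ref{lemma-Lp-Linfty}. Your uniform bound $T^{1/q}\le\max\{1,T\}$ is in fact cleaner than the paper's choice of $\delta^{-1}T$ with $0<\delta\le\min(T,1)$, and your explicit remark that Lemma~\ref{lemma-Lp-Linfty} is applied to the scalar function $g(t)=\|U(\cdot,t)\|_{L^2(\widetilde{\Omega}(t))}$ rather than to the Bochner-valued $U$ is a helpful clarification the paper leaves implicit.
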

\begin{proof}
  The case $q=1$ is given by equation \eqref{eq-U-L1t-L2x} in the existence proof. 
  We now prove the remaining cases in two steps: first for $2 \leq q < \infty$, then separately for $q=\infty$.  
  \begin{itemize}
    \item \textbf{Step 1. $2 \leq q < \infty$. } \par
    Since $\widetilde{U}_0 \in L^2(\widetilde{\Omega}^{0})$, the estimate \eqref{eq-U*-U0-L2} from the existence proof remains valid. Thus
    \begin{align*}
      \|U^*\|^q_{L^2(\widetilde{\Omega}^{*})} \leq e^{q M_A T {\color{black}{/2}}}\|\widetilde{U}_0\|^q_{L^2(\widetilde{\Omega}^{0})} < \infty, ~\forall t^* \in (0,T],   
    \end{align*}
    which implies
    \begin{align}\label{eq-Lqt-L2}
      \|\mathcal{U}(\lambda)\|_{L^{q}((0,T],H(\widetilde{\Omega}^{\lambda}))} 
      &= \left(\int_{0}^{T} \|U^{*}(\mathbf{x})\|^q_{L^2(\widetilde{\Omega}^{*})} \mathrm{d}t^{*}\right)^{1/q} \notag \\
      &\leq \left(\int_{0}^{T} e^{q M_A T {\color{black}{/2}}} \|\widetilde{U}_0(\mathbf{x})\|^q_{L^2(\widetilde{\Omega}^{0})} \mathrm{d}t^{*}\right)^{1/q} 
      = T^{1/q} e^{M_A T {\color{black}{/2}}} \|\widetilde{U}_0(\mathbf{x})\|_{L^2(\widetilde{\Omega}^{0})} < \infty.  
    \end{align}

    \item \textbf{Step 2. $q=\infty$. } \par
    Let ${\color{black}{\min(T,1)}} \geq \delta > 0$. From equation \eqref{eq-Lqt-L2} we observe that
    \begin{align}
      \|\mathcal{U}(\lambda)\|_{L^{q}((0,T],H(\widetilde{\Omega}^{\lambda}))} \leq \delta^{-1} T e^{M_A T {\color{black}{/2}}} \|\widetilde{U}_0(\mathbf{x})\|_{L^2(\widetilde{\Omega}^{0})}, \quad \forall 1 \leq q < \infty,
    \end{align}
    meaning there exists a finite positive constant $C$ (e.g., $C = \delta^{-1}$) independent of $q$ ({\color{black}{but dependent on $T$}}) such that the $L^{q}((0,T],H(\widetilde{\Omega}^{\lambda}))$-norm of the abstract function $\mathcal{U}(\lambda)$ is uniformly bounded for all $q$. 
    By Lemma \ref{lemma-Lp-Linfty}, we immediately obtain
    \begin{align*}
      U(\mathbf{x},t) \in L^{\infty}([0,T],H(\widetilde{\Omega}(t)))
    \end{align*}
    with
    \begin{align*}
      \|U(\mathbf{x},t)\|_{L^{q}([0,T],H(\widetilde{\Omega}(t)))} \xrightarrow[q \to \infty]{} \|U(\mathbf{x},t)\|_{L^{\infty}([0,T],H(\widetilde{\Omega}(t)))}. 
    \end{align*}
  \end{itemize}
\end{proof}

\section{Conclusion}\label{sec-Conclusion}
In this paper, a comprehensive mathematical formulation for the integral invariant model, 
derived from the linear transport equation and its adjoint equation, is provided in the form of a Cauchy initial value problem. 
The existence of this model's solution is established through the application of the Riesz representation theorem and abstract function theory in $L^{1}([0,T],L^2(\widetilde{\Omega}(t)))$. 
Subsequently, its stability and uniqueness are demonstrated by the strategic selection of the test function \(\Psi\). 
Following the well-posedness analysis of this integral invariant model, we further investigate its regularity properties.  
A more detailed analysis reveals that when initial value $\widetilde{U}_0(\mathbf{x}) \in L^2(\widetilde{\Omega}(0))$, 
the integrability in time of the integral invariant model can indeed be made arbitrarily high, specifically  
\( U(\mathbf{x},t) \in L^q([0,T],L^2(\widetilde{\Omega}(t))), ~1 \leq q \leq \infty \), 
where for the case $q=\infty$, we utilize the established Lemma \ref{lemma-Lp-Linfty} from previous work\cite{ref-CHenSX18-lemma}. 
\par
Future work will focus on investigating the differentiability of the integral invariant model, with the goal of achieving the regularity $U(\mathbf{x},t) \in H^1([0,T],H^1(\widetilde{\Omega}(t)))$ to the greatest extent possible. 

\section*{Acknowledgments}
The authors would like to thank the anonymous referees for their very valuable comments and suggestions. 

\section*{Declarations} 
\textbf{Funding}
There was no funding for this paper. \par
\textbf{Conflict of interest}
The authors have no conflict of interest to declare that are relevant to the content of this article.



\appendix
\section{Differentiation formula for variable-limit integrals(Leibniz Rule)} \label{appendix-Differentiation of a Definite Integral with Variable Limits}
\(\Omega({\color{black}{t}})\) denotes a time-dependent bounded domain, 
and at time \( t^* \), \(\Omega({\color{black}{t}})\) is fixed as \(\Omega^{\star}_t\), i.e., \(\Omega^{\star}_t = \Omega({\color{black}{t^{\star}}})\). Let \(\frac{\mathrm{d}}{\mathrm{d} t}(\cdot)\) 
represent the material derivative (substantial derivative), and denote \(\mathbf{u} = \frac{\mathrm{d}\mathbf{x}}{\mathrm{d}t}\). 
The differentiation formula for integrals with variable limits (also known as the Leibniz rule) is given by: 
\begin{itemize}
  \item For a scalar function \( f(\mathbf{x}, t) : \mathbb{R}^d \times [0, +\infty] \longrightarrow \mathbb{R} \), 
  \begin{align}\label{eq-scalar-Differentiation of a Definite Integral with Variable Limits}
  \frac{\mathrm{d}}{\mathrm{d} t}\left[\int_{\Omega{\color{black}{(t)}}} f(\mathbf{x}, t) \, \mathrm{d} \mathbf{x} \right]
  = \int_{\Omega^{\star}_t} \frac{\partial f}{\partial t} + \nabla \cdot \left(f \cdot \frac{\mathrm{d} \mathbf{x}}{\mathrm{d}t}\right) \, \mathrm{d} \mathbf{x}
  = \int_{\Omega^{\star}_t} \frac{\partial f}{\partial t} + \nabla \cdot (f \cdot \mathbf{u}) \, \mathrm{d} \mathbf{x}.
  \end{align}
  \item For a vector function \(\mathbf{F}(\mathbf{x}, t): \mathbb{R}^d \times [0, +\infty] \longrightarrow \mathbb{R}^d\), 
  \begin{align}\label{eq-vector-Differentiation of a Definite Integral with Variable Limits}
    \frac{\mathrm{d}}{\mathrm{d} t}\left[\int_{\Omega{\color{black}{(t)}}} \mathbf{F}(\mathbf{x}, t) \mathrm{~d} \mathbf{x} \right]
    = \int_{\Omega^{\star}_t} \frac{\partial \mathbf{F}}{\partial t} + \nabla \cdot (\mathbf{F} \otimes \frac{\mathrm{d} \mathbf{x}}{\mathrm{d}t}) \mathrm{~d} \mathbf{x}
    = \int_{\Omega^{\star}_t} \frac{\partial \mathbf{F}}{\partial t} + \nabla \cdot (\mathbf{F} \otimes \mathbf{u}) \mathrm{~d} \mathbf{x},
  \end{align}
  where ``\(\otimes\)'' denotes the ``Kronecker product'' between vectors, defined as:
  \begin{align}
    \mathbf{a} \otimes \mathbf{b} = \mathbf{a} \mathbf{b}^{\mathrm{T}}
    = \left[\begin{array}{l}
    a_1 \\
    a_2 \\
    a_3
    \end{array}\right]
    \left[b_1,  b_2,  b_3\right]
    = \left[\begin{array}{lll}
    a_1 b_1 & a_1 b_2 & a_1 b_3 \\
    a_2 b_1 & a_2 b_2 & a_2 b_3 \\
    a_3 b_1 & a_3 b_2 & a_3 b_3
    \end{array}\right].
  \end{align}
\end{itemize}
\begin{remark}
  Starting from the equation \eqref{eq-scalar-Differentiation of a Definite Integral with Variable Limits}, and using the divergence theorem, we obtain
  \[
  \begin{aligned}
  \frac{\mathrm{d}}{\mathrm{d} t}\left[\int_{\Omega{\color{black}{(t)}}} f(\mathbf{X}, t) \, \mathrm{d} \mathbf{X} \right]
  &= \int_{\Omega^{\star}_t} \frac{\partial f}{\partial t} + \int_{\Omega^{\star}_t} \nabla \cdot (\mathbf{u} \cdot f) \, \mathrm{d} \mathbf{X} \\
  &= \int_{\Omega^{\star}_t} \frac{\partial f}{\partial t} + \int_{\partial\Omega^{\star}_t} \mathbf{n} \cdot (\mathbf{u} \cdot f) \, \mathrm{d} \sigma(\mathbf{X}) \\
  &= \int_{\Omega^{\star}_t} \frac{\partial f}{\partial t} + \int_{\partial\Omega^{\star}_t} f (\mathbf{n} \cdot \mathbf{u}) \, \mathrm{d} \sigma(\mathbf{X}),
  \end{aligned}
  \]
  where \(\mathbf{n}\) is the unit outward normal vector to \(\partial\Omega_t^{\star}\), 
  and the last equality is the \textbf{Reynolds transport theorem}. 
\end{remark}

\section{Some Application Examples} \label{appendix-A-HYP-Examples} 
We list several common numerical test cases used in algorithms such as SLDG, SLFV, and ELDG(Eulerian-Lagrangian Discontinuous Galerkin), analyzing the properties of their velocity fields $\mathbf{A}$ to demonstrate the practicality of our \textbf{first-order continuous differentiability} and \textbf{uniformly bounded gradient} conditions: 
\begin{itemize}
  \item 1D cases: 
  \begin{itemize}
    \item $u_t+u_x=0$: \\
    $A(x,t)=1$ satisfies $A \in C^1(\mathbb{R}\times[0,+\infty))$ and $\sup_{x \in \mathbb{R}, t \in [0, +\infty]}|\partial_x A|=0$.  
    \\
    \item $u_t+(\sin(t)u)_x=0$: \\
    $A(x,t)=\sin(t)$ satisfies $A \in C^1(\mathbb{R}\times[0,+\infty))$ and $\sup_{x \in \mathbb{R}, t \in [0, +\infty]}|\partial_x A|=0$.  
    \\
    \item $u_t+(\sin(x)u)_x=0$ :\\
    $A(x,t)=\sin(x)$ satisfies $A \in C^1(\mathbb{R}\times[0,+\infty))$ and  \\
    $\sup_{x \in \mathbb{R}, t \in [0, +\infty]}|\partial_x A|=\sup_{x \in \mathbb{R}, t \in [0, +\infty]}|\cos(x)| = 1$.  
  \end{itemize}
  \hfill \\
  \item 2D cases: 
  \begin{itemize}
    \item $U_t+U_x+U_y=0$: \\
    $\mathbf{A}(\mathbf{x},t)=(1,1)^{\mathrm{T}}$ satisfies $\mathbf{A} \in C^1(\mathbb{R}^2\times[0,+\infty))$, \\
    $\sup_{\mathbf{x} \in \mathbb{R}^2, t \in [0, \infty)} \left\| \nabla_{\mathbf{x}} \mathbf{A}(\mathbf{x}, t) \right\|_{\mathrm{op}} = 0$  
    and $\sup_{\mathbf{x} \in \mathbb{R}^d, t \in [0, +\infty)} \left| \nabla_{\mathbf{x}} \cdot \mathbf{A}(\mathbf{x}, t) \right| = 0$. 
    \\
    \item Rigid body rotation: $U_t-(yU)_x+(xU)_y=0$: \\
    $\mathbf{A}(\mathbf{x},t)=(-y,x)^{\mathrm{T}}$ satisfies $\mathbf{A} \in C^1(\mathbb{R}^2\times[0,+\infty))$, \\
    $\sup_{\mathbf{x} \in \mathbb{R}^2, t \in [0, \infty)} \left\| \nabla_{\mathbf{x}} \mathbf{A}(\mathbf{x}, t) \right\|_{\mathrm{op}} 
    = \sup_{\mathbf{x} \in \mathbb{R}^2, t \in [0, \infty)}
    \left\|\left(\begin{array}{ll}
      0,-1 \\
      1,0
    \end{array}\right)\right\|_{\mathrm{op}}
    =1$  \\
    and $\sup_{\mathbf{x} \in \mathbb{R}^d, t \in [0, +\infty)} \left| \nabla_{\mathbf{x}} \cdot \mathbf{A}(\mathbf{x}, t) \right| = 0$. 
    \\
    \item Swirling deformation: 
    $$U_t-\left(2 \pi \cos ^2\left(\frac{x}{2}\right) \sin (y) g(t) U\right)_x+\left(2 \pi \sin (x) \cos ^2\left(\frac{y}{2}\right) g(t) U\right)_y=0, $$  
    where $g(t)=\cos (\pi t / T)$:  \\
    $\mathbf{A}(\mathbf{x},t)=
    \begin{pmatrix}
      2 \pi \cos ^2\left(\frac{x}{2}\right) \sin (y) \cos (\pi t / T) \\
      2 \pi \sin (x) \cos ^2\left(\frac{y}{2}\right) \cos (\pi t / T)
    \end{pmatrix}$ 
    satisfies 
    $\mathbf{A} \in C^1(\mathbb{R}^2\times[0,+\infty))$, \\
    \begin{align*}
      &\sup_{\mathbf{x} \in \mathbb{R}^2, t \in [0, \infty)} \left\| \nabla_{\mathbf{x}} \mathbf{A}(\mathbf{x}, t) \right\|_{\mathrm{op}} = \\
      &\sup_{\mathbf{x} \in \mathbb{R}^2, t \in [0, \infty)} 
      \left\| 
        \begin{pmatrix}
          -\pi \sin(x)\sin(y)\cos(\pi t / T) &,&  2\pi\cos^2\left(\frac{x}{2}\right)\cos(y)\cos(\pi t / T)\\
          2\pi\cos^2\left(\frac{y}{2}\right)\cos(x)\cos(\pi t / T) &,& -\pi \sin(x)\sin(y)\cos(\pi t / T)
        \end{pmatrix}
      \right\|_{\mathrm{op}} = 2\pi
    \end{align*}
    and \\
    $$\sup_{\mathbf{x} \in \mathbb{R}^d, t \in [0, +\infty)} \left| \nabla_{\mathbf{x}} \cdot \mathbf{A}(\mathbf{x}, t) \right| = \sup_{\mathbf{x} \in \mathbb{R}^d, t \in [0, +\infty)} \left| 2\pi\sin(x)\sin(y)\cos(\pi t/T) \right| = 2\pi.$$  
  \end{itemize}
\end{itemize}
\end{document}